
\documentclass[12pt]{elsarticle}
\usepackage[left=1in,right=1in, top=1.2in,bottom=1.2in]{geometry}
\usepackage{EJGTAart}
\usepackage{times}
\usepackage{amssymb,amsthm,latexsym,amsmath,epsfig,pgf}

\usepackage{mathabx}
\usepackage{algorithm}
\usepackage[noend]{algorithmic}
\usepackage{epstopdf}
\usepackage[normalem]{ulem}

\usepackage{graphicx}
\usepackage{tkz-graph}
\usetikzlibrary{calc,arrows.meta,positioning,fit}
\usetikzlibrary{decorations.markings}

\GraphInit[vstyle = Normal] 
\tikzset{
  LabelStyle/.style = {font = \tiny\bfseries },
  VertexStyle/.append style = { inner sep=5pt,
                                font = \tiny\bfseries},
  EdgeStyle/.append style = {->} }
\usetikzlibrary{arrows, shapes, positioning}

\newcommand{\overunderslash}[2]{{{\underset{#1}{\overset{#2}{/}}}}}


\volume{{\bf -} (-)}

\firstpage{1}

\runauth{
On the decomposition of $n$-partite graphs based on a vertex-removing synchronised graph product
\hspace{2ex} $\arrowvert$\hspace{2ex}
Antoon H. Boode}

\newtheorem{theorem}{Theorem}[section]
\newtheorem*{theorem A}{Theorem A}
\newtheorem*{theorem B}{N\"olker's Theorem}
\newtheorem{lemma}{Lemma}[section]

\theoremstyle{remark}

\theoremstyle{remark}

\newtheorem{claim}{Claim}[section]

\usepackage{csquotes}  
\usepackage{crop}

\makeatletter

\theoremstyle{definition}

\baselineskip 15pt

\begin{document}
\begin{frontmatter}
\papertitle{On the decomposition of $n$-partite graphs based on a vertex-removing synchronised graph product}



\author[label1]{Antoon H. Boode}

\address[label1]{\small {Robotics Research Group, \\InHolland University of Applied Science\\ The Netherlands}
\vspace*{2.5ex} 
 \normalfont ton.boode@inholland.nl}

\begin{abstract}
\noindent
Recently, we have introduced and modified graph-decomposition theorems based on a graph product motivated by applications in the context of synchronising periodic real-time processes. 
This vertex-removing synchronised product (VRSP) is based on modifications of the well-known Cartesian product and is closely related to the synchronised product due to W\"ohrle and Thomas. 
Here, we introduce a new graph-decomposition theorem based on the VRSP that decomposes an edge-labelled acyclic $n$-partite multigraph where all labels are the same.

\let\thefootnote\relax\footnotetext{Received: May 2022,
\quad Revised: ,\quad   Accepted: .
\\[3ex]
  }
  
\end{abstract}

\begin{keyword}
Vertex Removing Synchronised Graph Product 
\sep Product Graph
\sep Graph Decomposition
\sep Synchronising Processes

Mathematics Subject Classification : 05C76, 05C51, 05C20, 94C15

DOI: 

\end{keyword}

\end{frontmatter}

\section{Introduction}\label{sec:intro}
Recently, we have introduced three graph-decomposition theorems in 
~\cite{dam},~\cite{ejgta1} and ~\cite{ejgta2} based on a graph product motivated by applications in the context of synchronising periodic real-time processes, in particular in the field of robotics. 
More on the background, definitions and applications can be found in two conference contributions \cite{boode2014cpa, boode2013cpa}, four journal papers \cite{ejgta2, ejgta1, dam,ejgta}, the thesis of the 
author~\cite{boodethesis} and on ArXiv~\cite{mod-arxiv}. 
We repeat some of the definitions in Section~\ref{sec:term} for convenience.
In Section~\ref{sec:decomp}, we state and prove two lemmas on bipartite and 3-partite graphs which we use to state and prove the decomposition theorem on $n$-partite graphs. 



\section{Terminology and notation}\label{sec:term}
In order to avoid duplication we refer the interested reader to \cite{dam} or \cite{mod-arxiv} for background, definitions and more details.
Furthermore, we use the textbook of Bondy and Murty~\cite{GraphTheory} for terminology and notation we have not specified here, or in \cite{dam} or in \cite{mod-arxiv}. 
For convenience, we repeat a few definitions that are especially important for the decomposition of an $n$-partite graph.

Let $G$ be an edge-labelled acyclic directed multigraph with a vertex set $V$, an arc set $A$, a set of label pairs $L$ and two mappings.
The first mapping $\mu: A\rightarrow V\times V$ is an incidence function that identifies the {\em tail\/} and {\em head\/} of each arc $a\in A$. 
In particular, $\mu(a)=(u,v)$ means that the arc $a$ is directed from $u\in V$ to $v\in V$, where $tail(a)=u$ and $head(a)=v$. We also call $u$ and $v$ the {\em ends\/} of $a$. 
The second mapping $\lambda :A\rightarrow L$ assigns a label pair $\lambda(a)=(\ell(a),t(a))$ to each arc $a\in A$, where $\ell(a)$ is a string representing the (name of an) action and $t(a)$ is the {\em weight\/} of the arc $a$.

If $X\subseteq V(G)$, then the {\em subgraph of $G$ induced by $X$\/}, is the graph on vertex set $X$ containing all the arcs of $G$ which have both their ends in $X$ (together with $L$, $\mu$ and $\lambda$ restricted to this subset of the arcs).

If $X\subseteq A(G)$, then the {\em subgraph of $G$ arc-induced by $X$\/} is the graph on arc set $X$ containing all the vertices of $G$ which are an end of an arc in $X$ (together with $L$, $\mu$ and $\lambda$ restricted to this subset of the arcs). 

Let $G_i$ and $G_j$ be two disjoint graphs.
An arc $a\in A(G_i)$ with label pair $\lambda(a)$ is a \emph{synchronising arc} with respect to $G_j$, if and only if there exists an arc $b\in A(G_j)$ with label pair $\lambda(b)$ such that $\lambda(a)=\lambda(b)$.
Furthermore, an arc $a$ with label pair $\lambda(a)$ of $G_i\boxtimes G_j$ or $G_i\boxbackslash G_j$ is a \emph{synchronous} arc, whenever there exist a pair of arcs $a_i\in A(G_i)$ and $a_j\in A(G_j)$ with $\lambda(a)=\lambda(a_i)=\lambda(a_j)$.
Analogously, an arc $a$ with label pair $\lambda(a)$ of $G_i\boxtimes G_j$ or $G_i\boxbackslash G_j$ is an \emph{asynchronous} arc, whenever $\lambda(a)\notin L_i$ or $\lambda(a)\notin L_j$.

For disjoint nonempty sets $X,Y\subseteq V(G)$, $[X,Y]$ denotes the set of arcs of $G$ with one end in $X$ and one end in $Y$. If the head of the arc $a\in [X,Y]$ is in $Y$, we call $a$ a {\em forward arc\/} (of $[X,Y]$); otherwise, we call $a$ a {\em backward arc\/}. 

A graph $B$ is called \emph{n-partite} if there exists a partition of nonempty sets $V_1, V_2, \ldots, V_n$ of $V(B)$ into $n$ partite sets (i.e., $V(B) = V_1 \cup \ldots \cup V_n$, $V_i \cap V_j = \emptyset, i\neq j, i,j\in\{1,\ldots,n\}$) such that every arc of $B$ has its head vertex and tail vertex in different partite sets. 
The $n$-partite graph is denoted as $B(V_1,\ldots,V_n)$.
A bipartite graph $B(V_1, V_2)$ is called complete if, for every pair $x \in V_1$, $y \in V_2$, there is an arc $a$ met $\mu(a)=(x,y)$ or $\mu(a)=(y,x)$ in $B(V_1, V_2)$.



Informally, the vertex-removing synchronised product (VRSP) starts from the well-known Cartesian product, and is based on a reduction of the number of arcs and vertices due to the presence of synchronising arcs, i.e., arcs with the same label. This reduction is done in two steps: in the first step synchronising pairs of arcs from $G_1$ and $G_2$ are replaced by one (diagonal) arc, all other synchronising arcs are removed from the Cartesian product, giving the intermediate product; in the second step, vertices (and the arcs with that vertex as a tail) are removed one by one if they have $level > 0$ in the Cartesian product but $level=0$ in what is left of the intermediate product.

\section{The $n$-partite graph-decomposition theorem.}\label{sec:decomp}
We assume that the graphs we want to decompose are connected; if not, we can apply our decomposition results to the components separately. 
Although the decomposition theorems using the VRSP are dealing with edge-labelled graphs where the labels may be different, in this contribution, we consider only acyclic directed graphs where all labels are the same.

We continue with presenting and proving a decomposition lemma of a bipartite graph, given in Lemma ~\ref{lemma1}.
In this lemma, we are going to decompose a complete bipartite graph $B(X,Y),X=\{u_1,\ldots, u_{_{c_{_1}\cdot c_{_2}}}\}$,
$Y=\{v_1,\ldots, v_{_{c_{_3}\cdot c_{_4}}}\}$ where all arcs have the same label. 
But, different from Lemma~3.1 in~\cite{ejgta1}, we contract both $X$ and $Y$ using disjoint subsets $X'_i$ of $X$, disjoint subsets $X''_i$ of $X$, disjoint subsets $Y'_j$ of $Y$  and disjoint subsets $Y''_j$ of $Y$ such that $B(X,Y)\cong B(X,Y)\overunderslash{i=1}{c_1}X'_i\overunderslash{j=1}{c_3}Y'_j$ $\boxbackslash B(X,Y)\overunderslash{i=1}{c_2}X''_i\overunderslash{j=1}{c_4}Y''_j$, where $\bigcup\limits_{i=1}^{c_1}X'_i=\bigcup\limits_{i=1}^{c_2}X''_i=X$ and $\bigcup\limits_{j=1}^{c_3}Y'_j=\bigcup\limits_{j=1}^{c_4}Y''_j=Y$.
If the cardinality of $X$ is a prime number, hence, $c_1=1$ or $c_2=1$, then, assuming $c_2=1$ and, therefore, $c_1=|X|$, the left part of $B(X,Y)\overunderslash{i=1}{c_1}X'_i\overunderslash{j=1}{c_3}Y'_j$ $\boxbackslash B(X,Y)\overunderslash{i=1}{c_2}X''_i\overunderslash{j=1}{c_4}Y''_j$ is contracted such that each vertex $u_i$ of $X$ is replaced by the vertex $\tilde{x}_i$ and in the right part of $B(X,Y)\overunderslash{i=1}{c_1}X'_i\overunderslash{j=1}{c_3}Y'_j$ $\boxbackslash B(X,Y)\overunderslash{i=1}{c_2}X''_i\overunderslash{j=1}{c_4}Y''_j$, $\bigcup\limits_{i=1}^{c_2}X''_i=\bigcup\limits_{i=1}^{1}X''_i=X$ is contracted giving one vertex $\tilde{x}$.
We have similar contractions for $Y$ if $|Y|$ is a prime number.
Even, if $|X|$ and $|Y|$ are not prime numbers we can set $c_2$ and $c_3$ to one.
This leads to the decomposition $B(X,Y)\overunderslash{i=1}{c_1}X'_i/Y \boxbackslash B(X,Y)/X\overunderslash{j=1}{c_4}Y'_j$ which is equivalent to $B(X,Y)/Y\boxbackslash B(X,Y)/X$.
Therefore, Lemma~\ref{lemma1} is a generalisation of Lemma~3.1 in~\cite{ejgta1}. 
Note that for prime numbers for $|X|$ and $|Y|$ the contraction of $X$ to $\tilde{x}$ and $Y$ to $\tilde{y}$ are on opposite sides of the VRSP of $B(X,Y)\overunderslash{i=1}{c_1\cdot c_2}X'_i/Y \boxbackslash B(X,Y)/X\overunderslash{j=1}{c_3\cdot c_4}Y''_j$.
This is because $B(X,Y)\cong B(X,Y)\overunderslash{i=1}{c_1\cdot c_2}X'_i \overunderslash{j=1}{c_3\cdot c_4}Y''_j$ where $X'_i=\{u_{i}\}, Y'_j=\{v_j\}$, and we do not have a decomposition where the decomposed parts are smaller than $B(X,Y)$.

If the cardinality of $X$ is not a prime number then $X$ is partitioned into $c_1$ subsets $X'_i$ with $|X'_i|=c_2$ and $X$ is partitioned into $c_2$ subsets $X''_i$ with $|X''_i|=c_1$, $Y$ is partitioned into $c_3$ subsets $Y'_j$ with $|Y'_j|=c_4$ and $Y$ is partitioned into $c_4$ subsets $Y''_j$ with $|Y''_j|=c_3$. 
This gives the decomposition $B(X,Y)\cong B(X,Y)\overunderslash{i=1}{c_1}X'_i\overunderslash{j=1}{c_3}Y'_j$ $\boxbackslash B(X,Y)\overunderslash{i=1}{c_2}X''_i\overunderslash{j=1}{c_4}Y''_j$.

In Figure~\ref{BiPartiteExampleDecomposition1}, we give an illustrative example, where $|X|=3$ is a prime number and $|Y|=4=2\cdot 2$ is not.
With respect to $B(X,Y)\cong B(X,Y)\overunderslash{i=1}{c_1}X'_i\overunderslash{j=1}{c_3}Y'_j$ $\boxbackslash B(X,Y)\overunderslash{i=1}{c_2}X''_i\overunderslash{j=1}{c_4}Y''_j$ in Figure~\ref{BiPartiteExampleDecomposition1}, we have  that $X'_1=\{u_{1}\}$, $X'_2=\{u_{2}\}$, $X'_3=\{u_{3}\}$, $Y'_1=\{v_{1},v_{2}\}$, $Y'_2=\{v_{3},v_{4}\}$, $X''_1=\{u_{1},u_{2},u_{3}\}$, $Y''_1=\{v_{1},v_{3}\}$ and $Y''_2=\{v_{2},v_{4}\}$.
In this example, we illustrate how we can decompose a complete bipartite graph $B(X,Y)$ where all the arcs have the same label. 
Because all arcs have the same label, the labels are omitted in all figures of this contribution.

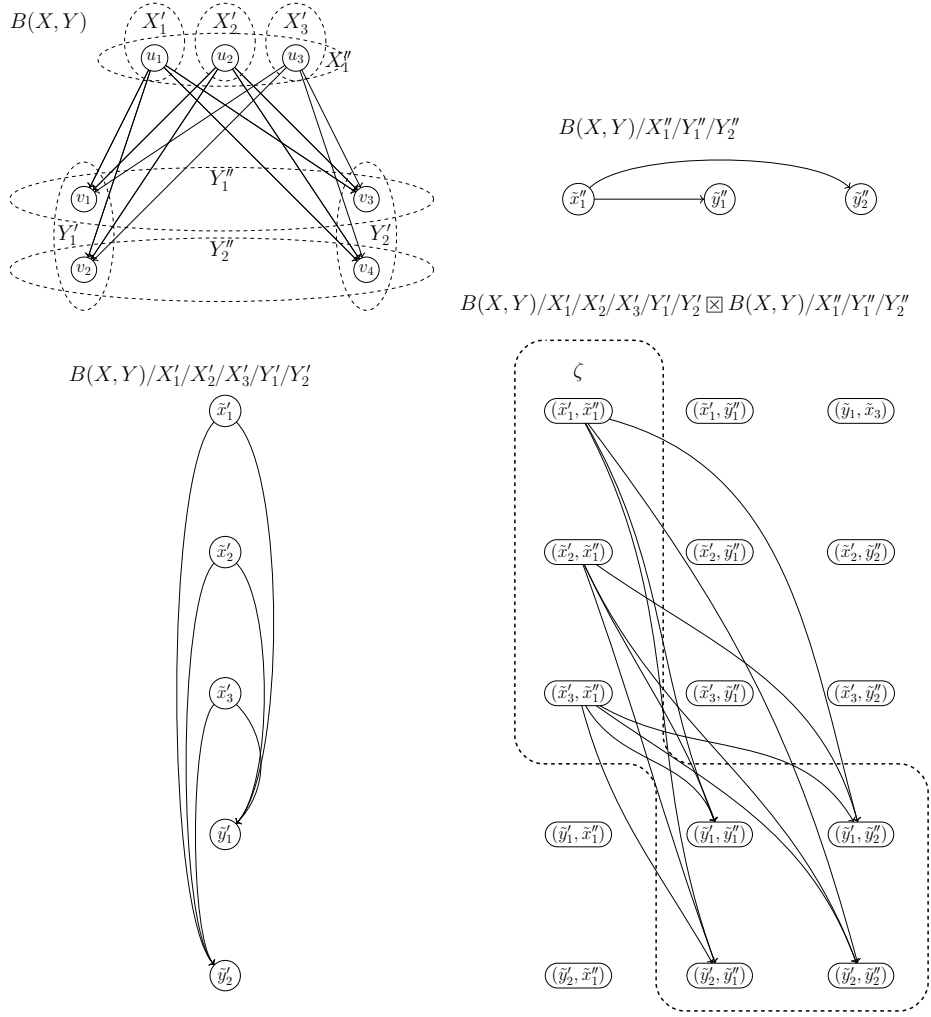
\begin{figure}[H]
\begin{center}
\resizebox{0.75\textwidth}{!}{
\begin{tikzpicture}[font=\sffamily\Large\bfseries]
  \tikzset{VertexStyle/.append style={
  font=\itshape\large, shape = circle,inner sep = 2pt, outer sep = 0pt,minimum size = 20 pt,draw}}
  \tikzset{LabelStyle/.append style={font = \itshape}}
  \SetVertexMath
  
  \def\x{0.0}
  \def\y{0.0}

  \node[ellipse,draw,dashed,minimum width = 1.7cm, 
    minimum height = 2.2cm] (e) at (\x-8,\y-1.55) {};  
\node at (\x-8.0,\y-1) {$X'_1$};

  \def\x{2.0}
  \def\y{0.0}

  \node[ellipse,draw,dashed,minimum width = 1.7cm, 
    minimum height = 2.2cm] (e) at (\x-8,\y-1.55) {};  
\node at (\x-8.0,\y-1) {$X'_2$};

  \def\x{4.0}
  \def\y{0.0}

  \node[ellipse,draw,dashed,minimum width = 1.7cm, 
    minimum height = 2.2cm] (e) at (\x-8,\y-1.55) {};  
\node at (\x-8.0,\y-1) {$X'_3$};

  \def\x{1.9}
  \def\y{-4.45}

  \node[ellipse,draw,dashed,minimum width = 12cm, 
    minimum height = 1.8cm] (e) at (\x-8,\y-1.55) {};  
\node at (\x-8.0,\y-1) {$Y''_1$};

  \def\x{1.9}
  \def\y{-6.45}

  \node[ellipse,draw,dashed,minimum width = 12cm, 
    minimum height = 1.8cm] (e) at (\x-8,\y-1.55) {};  
\node at (\x-8.0,\y-1) {$Y''_2$};

  \def\x{1.9}
  \def\y{-0.5}

  \node[ellipse,draw,dashed,minimum width = 7cm, 
    minimum height = 1.5cm] (e) at (\x-8,\y-1.55) {};  
\node at (\x-8.0,\y-1) {};
\node at (\x-4.7,\y-1.6) {$X''_1$};

  \def\x{-2.0}
  \def\y{-5.5}

  \node[ellipse,draw,dashed,minimum width = 1.7cm, 
    minimum height = 4.2cm] (e) at (\x-8,\y-1.55) {};  
\node at (\x-8.45,\y-1.5) {$Y'_1$};
	
  \def\x{6.0}
  \def\y{-5.5}

  \node[ellipse,draw,dashed,minimum width = 1.7cm, 
    minimum height = 4.2cm] (e) at (\x-8,\y-1.55) {};  
\node at (\x-7.6,\y-1.5) {$Y'_2$};

  \def\x{0.0}
  \def\y{2.0}
  
\node at (\x-11,\y-3) {$B(X,Y)$};
\node at (\x+6.0,\y-6) {$B(X,Y)/X''_1/Y''_1/Y''_2$};
\node at (\x-7.0,\y-13) {$B(X,Y)/X'_1/X'_2/X'_3/Y'_1/Y'_2$};
\node at (\x+7,\y-11) {$B(X,Y)/X'_1/X'_2/X'_3/Y'_1/Y'_2\boxtimes B(X,Y)/X''_1/Y''_1/Y''_2$};
\node at (\x+4,\y-12.9) {$\zeta$};
  \def\x{-10.0}
  \def\y{-4.0}
  \Vertex[x=\x+2, y=\y+2.0,L={u_{1}}]{u_1}
  \Vertex[x=\x+4, y=\y+2.0,L={u_{2}}]{u_2}
  \Vertex[x=\x+6, y=\y+2.0,L={u_{3}}]{u_3}
  \Vertex[x=\x+0, y=\y-2,L={v_{1}}]{u_4}
  \Vertex[x=\x+0, y=\y-4,L={v_{2}}]{u_5}
  \Vertex[x=\x+8, y=\y-2,L={v_{3}}]{u_6}
  \Vertex[x=\x+8, y=\y-4,L={v_{4}}]{u_7}

  \Edge(u_1)(u_4) 
  \Edge(u_1)(u_5) 
  \Edge(u_1)(u_6) 
  \Edge(u_1)(u_7) 
  \Edge(u_2)(u_4) 
  \Edge(u_2)(u_5) 
  \Edge(u_2)(u_6) 
  \Edge(u_2)(u_7) 

  \Edge(u_1)(u_4) 
  \Edge(u_1)(u_5) 
  \Edge(u_1)(u_6) 
  \Edge(u_1)(u_7) 
  \Edge(u_2)(u_4) 
  \Edge(u_2)(u_5) 
  \Edge(u_2)(u_6) 
  \Edge(u_2)(u_7) 
  \Edge(u_3)(u_4) 
  \Edge(u_3)(u_5) 
  \Edge(u_3)(u_6) 
  \Edge(u_3)(u_7)

  \def\x{4.0}
  \def\y{-8.0}
  \Vertex[x=\x+0, y=\y+2.0,L={\tilde{x}''_{1}}]{u_1}
  \Vertex[x=\x+4, y=\y+2.0,L={\tilde{y}''_{1}}]{u_2}
  \Vertex[x=\x+8, y=\y+2.0,L={\tilde{y}''_{2}}]{u_3}

  \Edge(u_1)(u_2) 
  \Edge [style={in = 135, out = 45,max distance=1.5cm}](u_1)(u_3) 

   \def\x{-10.0}
  \def\y{-18.0}
  \Vertex[x=\x+4, y=\y+6.0,L={\tilde{x}'_{1}}]{u_1}
  \Vertex[x=\x+4, y=\y+2.0,L={\tilde{x}'_{2}}]{u_2}
  \Vertex[x=\x+4, y=\y-2.0,L={\tilde{x}'_{3}}]{u_3}
  \Vertex[x=\x+4, y=\y-6.0,L={\tilde{y}'_{1}}]{u_4}
  \Vertex[x=\x+4, y=\y-10,L={\tilde{y}'_{2}}]{u_5}

  \Edge[style={-{Stealth[length=3mm, width=2mm]}, in = 45, out = -45,max distance=2cm}](u_1)(u_4) 
  \Edge[-{Stealth[length=3mm, width=2mm]}, style={in = -225, out = 225,max distance=2cm}](u_1)(u_5) 
  \Edge[-{Stealth[length=3mm, width=2mm]},style={in = 45, out = -45,max distance=1.5cm}](u_2)(u_4) 
  \Edge[-{Stealth[length=3mm, width=2mm]},style={in = -225, out = 225,max distance=1.5cm}](u_2)(u_5) 
  \Edge[-{Stealth[length=3mm, width=2mm]}, style={in = 45, out = -45,max distance=1.5cm}](u_3)(u_4) 
  \Edge[-{Stealth[length=3mm, width=2mm]}, style={in = -225, out = 225,max distance=1cm}](u_3)(u_5)

  \tikzset{VertexStyle/.append style={
  font=\itshape\large, shape = rounded rectangle, inner sep = 2pt, outer sep = 0pt,minimum size = 20 pt,draw}}

\def\x{4}
\def\y{-12}
  
  \Vertex[x=\x+0, y=\y+0,L={(\tilde{x}'_1,\tilde{x}''_1)}]{x_1x_1}
   \Vertex[x=\x+4, y=\y+0,L={(\tilde{x}'_1,\tilde{y}''_1)}]{x_1y_1}
  \Vertex[x=\x+8, y=\y+0,L={(\tilde{y}_1,\tilde{x}_3)}]{y_1x_3}

\def\dy{-4}
  \Vertex[x=\x+0, y=\y+\dy+0,L={(\tilde{x}'_2,\tilde{x}''_1)}]{x_2x_1}
   \Vertex[x=\x+4, y=\y+\dy+0,L={(\tilde{x}'_2,\tilde{y}''_1)}]{x_2y_1}
  \Vertex[x=\x+8, y=\y+\dy+0,L={(\tilde{x}'_2,\tilde{y}''_2)}]{x_2y_2}

\def\dy{-8}
  \Vertex[x=\x+0, y=\y+\dy+0,L={(\tilde{x}'_3,\tilde{x}''_1)}]{x_3x_1}
   \Vertex[x=\x+4, y=\y+\dy+0,L={(\tilde{x}'_3,\tilde{y}''_1)}]{x_3y_1}
  \Vertex[x=\x+8, y=\y+\dy+0,L={(\tilde{x}'_3,\tilde{y}''_2)}]{x_3y_2}

\def\dy{-12}
  \Vertex[x=\x+0, y=\y+\dy+0,L={(\tilde{y}'_1,\tilde{x}''_1)}]{y_1x_1}
   \Vertex[x=\x+4, y=\y+\dy+0,L={(\tilde{y}'_1,\tilde{y}''_1)}]{y_1y_1}
  \Vertex[x=\x+8, y=\y+\dy+0,L={(\tilde{y}'_1,\tilde{y}''_2)}]{y_1y_2}

\def\dy{-16}
  \Vertex[x=\x+0, y=\y+\dy+0,L={(\tilde{y}'_2,\tilde{x}''_1)}]{y_2x_1}
   \Vertex[x=\x+4, y=\y+\dy+0,L={(\tilde{y}'_2,\tilde{y}''_1)}]{y_2y_1}
  \Vertex[x=\x+8, y=\y+\dy+0,L={(\tilde{y}'_2,\tilde{y}''_2)}]{y_2y_2}

 \Edge[labelstyle={xshift=-8pt, yshift=12pt}, style={in = 112, out = -60,min distance=2cm}](x_1x_1)(y_1y_1) 
  \Edge[labelstyle={xshift=-8pt, yshift=12pt}, style={in = 110, out = -65,min distance=2cm}](x_2x_1)(y_1y_1) 
  \Edge[labelstyle={xshift=-8pt, yshift=12pt}, style={in = 110, out = -60,min distance=2cm}](x_3x_1)(y_1y_1) 
 
 \Edge[labelstyle={xshift=-8pt, yshift=12pt}, style={in = 110, out = -62,min distance=2cm}](x_1x_1)(y_2y_1) 
  \Edge[labelstyle={xshift=-8pt, yshift=12pt}, style={in = 110, out = -70,min distance=2cm}](x_2x_1)(y_2y_1) 
  \Edge[labelstyle={xshift=-8pt, yshift=12pt}, style={in = 120, out = -75,min distance=2cm}](x_3x_1)(y_2y_1) 
 
 \Edge[labelstyle={xshift=-8pt, yshift=12pt}, style={in = 105, out = -15,min distance=2cm}](x_1x_1)(y_1y_2) 
  \Edge[labelstyle={xshift=-8pt, yshift=12pt}, style={in = 105, out = -35,min distance=2cm}](x_2x_1)(y_1y_2) 
  \Edge[labelstyle={xshift=-8pt, yshift=12pt}, style={in = 120, out = -30,min distance=2cm}](x_3x_1)(y_1y_2) 
 
 \Edge[labelstyle={xshift=-8pt, yshift=12pt}, style={in = 105, out = -53,min distance=2cm}](x_1x_1)(y_2y_2) 
  \Edge[labelstyle={xshift=-8pt, yshift=/X'_112pt}, style={in = 110, out = -65,min distance=2cm}](x_2x_1)(y_2y_2) 
  \Edge[labelstyle={xshift=-8pt, yshift=12pt}, style={in = 110, out = -35,min distance=2cm}](x_3x_1)(y_2y_2) 

  \def\x{10}
  \def\y{-4.0}

  \def\x{-0.5+3.5}
  \def\y{-12.0}
\draw[circle, -,dashed, very thick,rounded corners=8pt] (\x+0.2,\y+2)--(\x+2.9,\y+2)--(\x+3.4,\y+1.5)--(\x+3.4,\y-9.5)--(\x+3.9,\y-10)--(\x+10.4,\y-10) --(\x+10.9,\y-10.5) -- (\x+10.9,\y-16.5)-- (\x+10.4,\y-17) -- (\x-0.3+4,\y-17) -- (\x-0.8+4,\y-16.5) -- (\x-0.8+4,\y-9.5-1) -- (\x-0.8+3.5,\y-9-1) -- (\x-0.8+0.5,\y-9-1) -- (\x-0.8,\y-8.5-1) -- (\x-0.8,\y-4.5)--(\x-0.8,\y+1.5) -- (\x-0.3,\y+2)--(\x+0.1,\y+2);
\end{tikzpicture}
}
\end{center}
\caption{Decomposition of $B(X,Y)\cong B(X,Y)/X'_1/X'_2/X'_3/Y'_1/Y'_2\boxbackslash B(X,Y)/X''_1/Y''_1/Y''_2$. The set $\zeta$ from the proof of Lemma~\ref{lemma1} and the graph isomorphic to $B(X,Y)$ induced by $\zeta$ in $B(X,Y)/X'_1/X'_2/X'_3/Y'_1/Y'_2\boxtimes B(X,Y)/X''_1/Y''_1/Y''_2$ is indicated within the dotted region (although not all arcs fit into this region).}
  \label{BiPartiteExampleDecomposition1}
\end{figure}
Note that the proof of Lemma~\ref{lemma1} is modelled along the same lines as the proofs of the theorems presented in \cite{ejgta1} and in~\cite{dam}.
%
\begin{lemma}\label{lemma1}
Let $B(X,Y)$ be a weakly connected complete bipartite graph where the labels of all arcs are the same.
Let $[X,Y]$ contain only forward arcs, or let $[X,Y]$ contain only backward arcs.
Let $|X|=c_1\cdot c_2,|Y|=c_3\cdot c_4, c_1,\ldots,c_4\in~\hspace{-5pt}\mathbb{N}$.
Then there exist $X'_g,X''_h, Y'_i$ and $Y''_j$ such that $B(X,Y)\cong B(X,Y)\overunderslash{g=1}{c_1}X'_g\overunderslash{i=1}{c_3}Y'_i\boxbackslash B(X,Y)\overunderslash{h=1}{c_2}X''_h\overunderslash{j=1}{c_4}Y''_j$.
\end{lemma}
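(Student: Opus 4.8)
The plan is to build an explicit isomorphism between $B(X,Y)$ and the VRSP of the two contracted bipartite graphs, following the template of the decomposition proofs in \cite{ejgta1} and \cite{dam}. First I would set up notation: write $L = B(X,Y)\overunderslash{g=1}{c_1}X'_g\overunderslash{i=1}{c_3}Y'_i$ and $R = B(X,Y)\overunderslash{h=1}{c_2}X''_h\overunderslash{j=1}{c_4}Y''_j$, choosing the partitions so that $|X'_g| = c_2$, $|X''_h| = c_1$, $|Y'_i| = c_4$, $|Y''_j| = c_3$, and fix a ``coordinate'' bijection $X \leftrightarrow \{1,\dots,c_1\}\times\{1,\dots,c_2\}$ sending $u$ to the pair $(g,h)$ with $u \in X'_g \cap X''_h$, and similarly $Y \leftrightarrow \{1,\dots,c_3\}\times\{1,\dots,c_4\}$. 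After contraction, $L$ has vertex set $\{\tilde{x}'_g\} \cup \{\tilde{y}'_i\}$ and $R$ has vertex set $\{\tilde{x}''_h\} \cup \{\tilde{y}''_j\}$; since all arcs carry the same label and $B(X,Y)$ is complete with only forward (resp.\ only backward) arcs, both $L$ and $R$ are themselves complete bipartite graphs with that one label. I would treat the ``only forward arcs'' case and note the backward case is symmetric.

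Next I would unwind the VRSP construction. The Cartesian product $L \boxtimes R$ has vertex set of four types according to whether each coordinate is an $\tilde{x}$- or $\tilde{y}$-vertex: $(\tilde{x}'_g,\tilde{x}''_h)$, $(\tilde{x}'_g,\tilde{y}''_j)$, $(\tilde{y}'_i,\tilde{x}''_h)$, $(\tilde{y}'_i,\tilde{y}''_j)$. Because every arc in both factors has the same label, every arc of the Cartesian product is a synchronising arc, so in the first step of the VRSP each commuting square collapses to a single diagonal arc and the non-diagonal synchronising arcs are deleted; the surviving diagonal arcs run from $(\tilde{x}'_g,\tilde{x}''_h)$ to $(\tilde{y}'_i,\tilde{y}''_j)$. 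Then the level-pruning second step removes every vertex that had positive level in $L\boxtimes R$ but level $0$ in the intermediate graph — these are exactly the ``mixed'' vertices $(\tilde{x}'_g,\tilde{y}''_j)$ and $(\tilde{y}'_i,\tilde{x}''_h)$ and the source vertices of type $(\tilde{x}'_g,\tilde{x}''_h)$ whose out-arcs were all deleted — wait, the sources keep their diagonal out-arcs, so what remains after pruning is the set $\zeta$ consisting of the $(\tilde{x}'_g,\tilde{x}''_h)$ and the $(\tilde{y}'_i,\tilde{y}''_j)$ together with the diagonal arcs between them. This $\zeta$ is the set highlighted in Figure~\ref{BiPartiteExampleDecomposition1}, and the subgraph it induces is what I claim is isomorphic to $B(X,Y)$.

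Then I would define the isomorphism $\varphi$ explicitly: send $u \in X$ with coordinates $(g,h)$ to the vertex $(\tilde{x}'_g,\tilde{x}''_h)$, and send $v \in Y$ with coordinates $(i,j)$ to $(\tilde{y}'_i,\tilde{y}''_j)$. Since the coordinate maps are bijections, $\varphi$ is a bijection on vertices. For the arcs, the unique arc of $B(X,Y)$ from $u$ to $v$ should map to the diagonal arc from $(\tilde{x}'_g,\tilde{x}''_h)$ to $(\tilde{y}'_i,\tilde{y}''_j)$; I must check that this diagonal arc exists in the VRSP exactly when needed, i.e.\ that it survived both steps. It survives the first step because $L$ contains the arc $\tilde{x}'_g \to \tilde{y}'_i$ and $R$ contains $\tilde{x}''_h \to \tilde{y}''_j$ (both factors complete bipartite), giving a synchronising pair that becomes a diagonal; and it survives the second step because its head $(\tilde{y}'_i,\tilde{y}''_j)$ has the same level in the intermediate graph as in $L\boxtimes R$. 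I then verify there are no \emph{extra} arcs among $\zeta$: any arc of $L\boxtimes R$ with both ends in $\zeta$ must be a diagonal of the above form, since non-diagonal surviving arcs would have to connect two $\tilde{x}$-vertices or two $\tilde{y}$-vertices, which is impossible in a Cartesian product of bipartite graphs. Finally, label pairs are preserved because every arc in sight carries the single common label.

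\textbf{Main obstacle.} The routine verification is the bijection-on-coordinates bookkeeping, but the genuinely delicate step is the level-pruning analysis in the second VRSP step: I must argue carefully that \emph{every} mixed vertex $(\tilde{x}'_g,\tilde{y}''_j)$ and $(\tilde{y}'_i,\tilde{x}''_h)$ gets removed (its level drops to $0$ because all its incident arcs in the intermediate graph are gone or reversed in the level sense) while \emph{no} vertex of $\zeta$ is removed, and that this removal does not cascade — removing the mixed vertices must not strand any vertex of $\zeta$ at level $0$ that should not be there. This requires identifying the level function on $L\boxtimes R$ (sources $(\tilde{x}'_g,\tilde{x}''_h)$ at level $0$, the $(\tilde{y}'_i,\tilde{y}''_j)$ at level $2$, mixed vertices at level $1$) and checking the intermediate graph has the $(\tilde{y}'_i,\tilde{y}''_j)$ still reachable at positive level via the diagonals, so they are never pruned. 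Handling the boundary cases $c_1 = 1$, $c_2 = 1$, $c_3 = 1$ or $c_4 = 1$ (where one contraction is trivial and the two contractions of a prime-cardinality side sit on opposite sides of the VRSP, as discussed before the lemma) needs a brief separate remark but follows the same argument.
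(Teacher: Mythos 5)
Your proposal is correct and follows essentially the same route as the paper's own proof: the same coordinate bijection $u\mapsto(\tilde{x}'_g,\tilde{x}''_h)$, $v\mapsto(\tilde{y}'_i,\tilde{y}''_j)$ onto the set $\zeta$, the same observation that every commuting square collapses to a diagonal arc because all arcs synchronise, and the same level argument (mixed vertices have positive level in the Cartesian product but level $0$ in the intermediate product, since no arc can have its head at a vertex whose first or second coordinate is a source of the corresponding factor). The ``main obstacle'' you flag is precisely the step the paper's proof devotes its second half to, and your resolution of it matches the paper's.
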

\begin{proof}
Let $[X,Y]$ contain only forward arcs.
It is sufficient to define a mapping $\phi:V(B(X,Y,Z))$ $\rightarrow V(B(X,Y)\overunderslash{g=1}{c_1}X'_g\overunderslash{i=1}{c_3}Y'_i\,\boxbackslash B(X,Y)\overunderslash{h=1}{c_2}X''_h\overunderslash{j=1}{c_4}Y''_j)$ and to prove that $\phi$ is an isomorphism from $B(X,Y)$ to $B(X,Y)\overunderslash{g=1}{c_1}X'_g\overunderslash{i=1}{c_3}Y'_i\boxbackslash B(X,Y,Z)\overunderslash{h=1}{c_2}X''_h$ $\overunderslash{j=1}{c_4}Y''_j$.

Because $B(X,Y)$ is complete there are arcs from each vertex of $X$ to all vertices of $Y$.
Then, without loss of generality, we define $X=\{u_{1,1},\ldots,u_{1,c_{_2}},\ldots,u_{c_{_1},1},\ldots,u_{c_{_1},c_{_2}}\}$.
Now, we can contract $X$ using the sets $X'_1,\ldots, X'_{c_{_1}}$, $X'_g=\{u_{g,1},\ldots,u_{g,c_{_2}}\}$, $|X'_g|=c_2,g=1,\ldots,c_1$. The vertices in the sets $X'_1,\ldots, X'_{c_{_1}}$ are then replaced by the vertices $\tilde{x}'_1,\ldots, \tilde{x}'_{c_{_1}}$, respectively, (note that there are no arcs that have both their ends in $X'_g$), and we can contract $X$ using the sets $X''_{1},\ldots,X''_{c_{_2}},$ $X''_h=\{u_{1,h},\ldots,u_{c_{_1},h}\}$, $|X''_h|=c_1,h=1,\ldots,c_2$. The vertices in the sets $X''_1,\ldots, X''_{c_{_2}}$ are then replaced by the vertices $\tilde{x}''_1,\ldots, \tilde{x}''_{c_{_2}}$, respectively, (note that there are no arcs that have both their ends in  $X''_h$).
Likewise, for $|Y|=c_3\cdot c_4$,  we define $Y=\{v_{1,1},\ldots,v_{1,c_{_4}},\ldots,v_{c_{_3},1},\ldots,v_{c_{_3},c_{_4}}\}$.
Then, we can contract $Y$ using the sets $Y'_1,\ldots, Y'_{c_{_3}}$, $Y'_i=\{v_{i,1},\ldots,v_{i,c_{_4}}\}$, $|Y'_i|=c_4,i=1,\ldots,c_3$. 
The vertices in the sets $Y'_1,\ldots, Y'_{c_{_3}}$ are then replaced by the vertices $\tilde{y}'_1,\ldots, \tilde{y}'_{c_{_3}}$, respectively, (note that there are no arcs that have both their ends in $Y'_k$), and we can contract $Y$ using the sets $Y''_{1},\ldots,Y''_{c_{_4}},Y''_j=\{v_{1,l},\ldots,v_{c_{_3},j}\}, |Y'_j|=c_3,j=1,\ldots,c_4$. The vertices in the sets $Y''_1,\ldots, Y''_{c_{_4}}$ are then replaced by the vertices $\tilde{y}''_1,\ldots, \tilde{y}''_{c_{_4}}$, respectively, (note that there are no arcs that have both their ends in  $Y''_j$). 
 
Consider the mapping $\phi:V(B(X,Y))\rightarrow V(B(X,Y)\overunderslash{g=1}{c_1}X'_g\overunderslash{i=1}{c_3}Y'_i\boxbackslash B(X,Y)\overunderslash{h=1}{c_2}X''_h\overunderslash{j=1}{c_4}Y''_j)$ defined by $\phi(u_{g,h})=(\tilde{x}'_g,\tilde{x}''_h),\phi(v_{i,j})=(\tilde{y}'_i,\tilde{y}''_j)$.
Then $\phi$ is obviously a bijective map if $V(B(X,Y)$ $\overunderslash{g=1}{c_1}X'_g\overunderslash{i=1}{c_3}Y'_i\boxbackslash B(X,Y)\overunderslash{h=1}{c_2}X''_h\overunderslash{j=1}{c_4}Y''_j)=\zeta$, where $\zeta$ is defined as $\zeta=\{(\tilde{x}'_g,\tilde{x}''_h)\mid u_{g,h}\in X,\phi(u_{g,h})=(\tilde{x}'_g,\tilde{x}''_h)\}\cup \{(\tilde{y}'_i,\tilde{y}''_j)\mid v_{i,j}\in Y,\phi(v_{i,j})=(\tilde{y}'_i,\tilde{y}''_j)\}$.
We are going to show this later by arguing all the other vertices (and their labelled arcs) $(\tilde{x}'_g,\tilde{y}''_j), (\tilde{x}'_g,\tilde{z}''_l),(\tilde{y}'_i,\tilde{x}''_h)$ and $(\tilde{y}'_i,\tilde{z}''_l)$ of $B(X,Y)\overunderslash{g=1}{c_1}X'_g\overunderslash{i=1}{c_3}Y'_i$ $\boxtimes B(X,Y)\overunderslash{h=1}{c_2}X''_h\overunderslash{j=1}{c_4}Y''_j$ will disappear from $B(X,Y)\overunderslash{g=1}{c_1}X'_g\overunderslash{i=1}{c_3}Y'_i\boxtimes B(X,Y)\overunderslash{h=1}{c_2}X''_h\overunderslash{j=1}{c_4}Y''_j$. 	
But first we are going to prove the following claim. 
\begin{claim}\label{claim1}
The subgraph of $B(X,Y)\overunderslash{g=1}{c_1}X'_g\overunderslash{i=1}{c_3}Y'_i\boxtimes B(X,Y,Z)\overunderslash{h=1}{c_2}X''_h\overunderslash{j=1}{c_4}Y''_j$ induced by $\zeta$ is isomorphic to $B(X,Y)$.
\end{claim}
\begin{proof}
Firstly, $\phi$ is a bijection from $V(B(X,Y))$ to $\zeta$.
Secondly, an arc $u_{i_{_1},i_{_2}}v_{j_{_1},j_{_2}}$ in $B(X,Y)$ corresponds to the arc $\tilde{x}'_{i_{_1}}\tilde{y}'_{j_{_1}}$ in $B(X,Y)\overunderslash{i=1}{c_1}X'_{i}\overunderslash{j=1}{c_3}Y'_{j}$ and to the arc $\tilde{x}''_{i_{_2}}\tilde{y}''_{j_{_2}}$ in $B(X,Y)\overunderslash{i=1}{c_2}X''_{i}\overunderslash{j=1}{c_4}Y''_{j}$.
Because all arcs of $B(X,Y)\overunderslash{i=1}{c_1}X'_{i}\overunderslash{j=1}{c_3}Y'_{j}$ and $B(X,Y)$ $\overunderslash{i=1}{c_2}X''_{i}\overunderslash{j=1}{c_4}Y''_{j}$ are synchronising arcs, for each pair of arcs $\tilde{x}'_{i_{_1}}\tilde{y}'_{j_{_1}},\tilde{x}''_{i_{_2}}\tilde{y}''_{j_{_2}}$ of $B(X,Y)\overunderslash{i=1}{c_1}X'_{i}\overunderslash{j=1}{c_3}Y'_{j}$ and $B(X,Y)$ $\overunderslash{i=1}{c_2}X''_{i}\overunderslash{j=1}{c_4}Y''_{j}$, respectively, there is an arc $(\tilde{x}'_{i_{_1}},\tilde{x}''_{i_{_2}})(\tilde{y'_{j_{_1}}} \tilde{y}''_{j_{_2}})$ of $B(X,Y)\overunderslash{i=1}{c_1}X'_{i}$ $\overunderslash{j=1}{c_3}Y'_{j}\boxtimes B(X,Y)\overunderslash{i=1}{c_2}X''_{i}\overunderslash{j=1}{c_4}Y''_{j}$ and, therefore an arc $u_{i_{_1},i_{_2}}v_{j_{_1},j_{_2}}$ in $B(X,Y)$ corresponds to an arc $(\tilde{x}'_{i_{_1}},\tilde{x}''_{i_{_2}})(\tilde{y'}_{i_{_1}} \tilde{y}''_{i_{_2}})$ in $B(X,Y)\overunderslash{i=1}{c_1}X'_{i}\overunderslash{j=1}{c_3}Y'_{j}\boxtimes B(X,Y)\overunderslash{i=1}{c_2}X''_{i}\overunderslash{j=1}{c_4}Y''_{j}$.

Hence, the map $\phi$ is a bijjection from $B(X,Y)$ to the subgraph of $B(X,Y)\overunderslash{i=1}{c_1}X'_{i}\overunderslash{j=1}{c_3}Y'_{j}\boxtimes B(X,Y)\overunderslash{i=1}{c_2}X''_{i}\overunderslash{j=1}{c_4}Y''_{j}$ induced by $\zeta$ preserving the arcs and their labels and, therefore, $B(X,Y)$ is isomorphic to the subgraph of $B(X,Y)\overunderslash{i=1}{c_1}X'_{i}\overunderslash{j=1}{c_3}Y'_{j}\boxtimes B(X,Y)\overunderslash{i=1}{c_2}X''_{i}\overunderslash{j=1}{c_4}Y''_{j}$ induced by $\zeta$.
This completes the proof of Claim~\ref{claim1}.
\end{proof}
We continue with the proof of Lemma~\ref{lemma1}. 
It remains to show that all vertices of $V(B(X,Y)$ $\overunderslash{i=1}{c_1}X'_{i}\overunderslash{j=1}{c_3}Y'_{j}\Box B(X,Y)\overunderslash{i=1}{c_2}X''_{i}\overunderslash{j=1}{c_4}Y''_{j})\backslash \zeta$ (and the arcs of which these vertices are an end) disappear from $B(X,Y)\overunderslash{i=1}{c_1}X'_{i}\overunderslash{j=1}{c_3}Y'_{j}\boxtimes B(X,Y)\overunderslash{i=1}{c_2}X''_{i}\overunderslash{j=1}{c_4}Y''_{j}$.
This follows directly by the observation that only the vertices $(\tilde{x}'_i,\tilde{x}''_j)$ have level~0 by definition of the Cartesian product. 
Then, all other vertices $(\tilde{x}'_i,\tilde{y}''_j)$ and  $(\tilde{y}'_i,\tilde{x}''_j)$ have level$>$0 in $B(X,Y)\overunderslash{i=1}{c_1}X'_{i}\overunderslash{j=1}{c_2}Y'_{j}\Box$ $B(X,Y)\overunderslash{i=1}{c_2}X''_{i}\overunderslash{j=1}{c_4}Y''_{j}$.
But these vertices $(\tilde{x}'_i,\tilde{y}''_j)$ and $(\tilde{y}'_i,\tilde{x}''_j)$ have level~0 in $B(X,Y)$ $\overunderslash{i=1}{c_1}X'_{i}\overunderslash{j=1}{c_3}Y'_{j}\boxtimes B(X,Y)\overunderslash{i=1}{c_2}X''_{i}\overunderslash{j=1}{c_4}Y''_{j}$ and are, therefore, removed from $B(X,Y)\overunderslash{i=1}{c_1}X'_{i}\overunderslash{j=1}{c_3}Y'_{j}\boxtimes B(X,Y)\overunderslash{i=1}{c_2}X''_{i}\overunderslash{j=1}{c_4}Y''_{j}$, together with the arcs of which these vertices are an end.
This is because there are no arcs $a$ with $ head(a)=\tilde{x}'_i$ in $B(X,Y)\overunderslash{i=1}{c_1}X'_i$ $\overunderslash{j=1}{c_3}Y'_j$,\ and, therefore, there are no arcs $b$ with $head(b)=(\tilde{x}'_i,\tilde{y}''_j)$ in $B(X,Y)$ $\overunderslash{i=1}{c_1}X'_i$ $\overunderslash{j=1}{c_3}Y'_j\boxtimes B(X,Y)\overunderslash{i=1}{c_2}X''_i\overunderslash{j=1}{c_4}Y''_j$, and there are no arcs $a$ with $head(a)= \tilde{x}''_i$ in $B(X,Y)\overunderslash{i=1}{c_2}X''_i$ $\overunderslash{j=1}{c_4}Y''_j$ and, therefore, there are no arcs $b$ with  $head(b)=(\tilde{y}'_j,\tilde{x}''_i)$ in $B(X,Y)\overunderslash{i=1}{c_1}X'_i$ $\overunderslash{j=1}{c_3}Y'_j\boxtimes B(X,Y)\overunderslash{i=1}{c_2}X''_i\overunderslash{j=1}{c_4}Y''_j$.
Hence, $(\tilde{x}'_i,\tilde{y}''_j)$ and $(\tilde{y}'_j,\tilde{x}''_i)$ must have level~0 in $B(X,Y)\overunderslash{i=1}{c_1}X'_i$ $\overunderslash{j=1}{c_3}Y'_i\boxtimes B(X,Y)\overunderslash{i=1}{c_2}X''_i\overunderslash{j=1}{c_4}Y''_j$.
Then $B(X,Y)\cong B(X,Y)\overunderslash{i=1}{c_1}X'_i\overunderslash{j=1}{c_3}Y'_j\boxbackslash B(X,Y)\overunderslash{i=1}{c_2}X''_i\overunderslash{j=1}{c_4}Y''_j$.
The proof for $[X,Y]$ containing only backward arcs is similar.
This completes the proof of Lemma~\ref{lemma1}.
\end{proof}

We continue with the decomposition of a $3$-partite graph $B(X,Y,Z)$.
In Figure~\ref{BiPartiteExampleDecomposition2}, we show an example of a $3$-partite graph $B(X,Y,Z)$ that is decomposed in graphs $B(X,Y,Z)$ $/X'_1/Y'_1/Y'_2/Y'_3/$ $Z'_1$ and $B(X,Y,Z)/X''_1/Y''_1/Z''_1/Z''_2$. 
Note that the bipartite subgraph induced by the vertex set $Y\cup Z$ is not complete, but the bipartite subgraph arc-induced by the arcs of $[Y, Z]$ is complete.

\begin{figure}[H]
\begin{center}
\resizebox{0.75\textwidth}{!}{
\begin{tikzpicture}[->,>=Latex,shorten >=0pt,auto,node distance=2.5cm,
  main node/.style={circle,fill=blue!10,draw, font=\sffamily\Large\bfseries}]
  \tikzset{VertexStyle/.append style={
  font=\itshape\large, shape = circle,inner sep = 2pt, outer sep = 0pt,minimum size = 32 pt,draw}}
  \tikzset{LabelStyle/.append style={font = \itshape}}
  \SetVertexMath
  \def\x{0.0}
  \def\y{2.0}
\node at (\x-8.5,\y-0) {$B(X,Y,Z)$};
\node at (\x-10+6.5,\y-0) {$X=X'_1=X''_1$};
\node at (\x-10+5.5,\y-3) {$Y=Y''_1$};
\node at (\x-10+0.65,\y-4) {$Y'_1$};
\node at (\x-10+3.65,\y-4) {$Y'_2$};
\node at (\x-10+6.65,\y-4) {$Y'_3$};
\node at (\x-10+5.5,\y-6) {$Z=Z'_1$};
\node at (\x-10+0.65,\y-7) {$Z''_1$};
\node at (\x-10+3.65,\y-7) {$Z''_2$};

\node at (\x+4.0,\y-0) {$B(X,Y,Z)/X''/Y''/Z''_1/Z''_2$};
\node at (\x-7.0,\y-8) {$B(X,Y,Z)/X'/Y'_1/Y'_2/Y'_3/Z'$};
\node at (\x+5,\y-8) {$B(X,Y,Z)/X'/Y'_1/Y'_2/Y'_3/Z'\boxtimes B(X,Y,Z)/X''/Y''/Z''_1/Z''_2$};
\node at (\x+3,\y-9.9) {$\zeta$};
  \def\x{-10.0}
  \def\y{-2.0}
  \Vertex[x=\x+4, y=\y+4.0,L={u_{1}}]{u_1}
  \Vertex[x=\x+1, y=\y+1.0,L={u_{2}}]{u_2}
  \Vertex[x=\x+4, y=\y+1.0,L={u_{3}}]{u_3}
  \Vertex[x=\x+7, y=\y+1.0,L={u_{4}}]{u_4}
  \Vertex[x=\x+1, y=\y-2,L={u_{5}}]{u_5}
  \Vertex[x=\x+4, y=\y-2,L={u_{6}}]{u_6}

  \Edge(u_1)(u_2) 
  \Edge(u_1)(u_3) 
  \Edge(u_1)(u_4) 
  \Edge(u_2)(u_5) 
  \Edge(u_2)(u_6) 
  \Edge(u_3)(u_5) 
  \Edge(u_3)(u_6) 

   \def\x{-1.0}
  \def\y{-2.0}
  \Vertex[x=\x+3, y=\y+0.0,L={\tilde{x}''_1}]{u_1}
  \Vertex[x=\x+6, y=\y+0.0,L={\tilde{y}''_1}]{u_2}
  \Vertex[x=\x+9, y=\y+0.0,L={\tilde{z}''_1}]{u_3}
  \Vertex[x=\x+12, y=\y+0.0,L={\tilde{z}''_2}]{u_4}

  \Edge(u_1)(u_2) 
  \Edge(u_2)(u_3) 
  \Edge[style={in = 135, out = 45,min distance=2cm}](u_2)(u_4)

   \def\x{-7.0}
  \def\y{-7.0}
  \Vertex[x=\x+0, y=\y-3.0,L={\tilde{x}'_1}]{u_1}
  \Vertex[x=\x+0, y=\y+0.0,L={\tilde{y}'_3}]{u_2}
  \Vertex[x=\x+0, y=\y-6.0,L={\tilde{y}'_1}]{u_3}
  \Vertex[x=\x+0, y=\y-9.0,L={\tilde{y}'_2}]{u_4}
  \Vertex[x=\x+0, y=\y-12.0,L={\tilde{z}'_1}]{u_5}

  \Edge(u_1)(u_2) 
  \Edge(u_1)(u_3) 
  \Edge[style={in = 45, out = -45,min distance=2cm}](u_1)(u_4) 
  \Edge[style={in = 135, out = -135,min distance=2cm}](u_3)(u_5) 
  \Edge(u_4)(u_5)

  \tikzset{VertexStyle/.append style={
  font=\itshape\large, shape = rounded rectangle, inner sep = 2pt, outer sep = 0pt,minimum size = 20 pt,draw}}

  \def\x{2.0}
  \def\y{-9.0}
  \Vertex[x=\x+0, y=\y+2.0,L={(\tilde{y}'_3,\tilde{x}''_1)}]{x2x1}
  \Vertex[x=\x+3, y=\y+2.0,L={(\tilde{y}'_3,\tilde{y}''_1)}]{x2x2}
  \Vertex[x=\x+6, y=\y+2.0,L={(\tilde{y}'_3,\tilde{z}''_1)}]{x2x3}
  \Vertex[x=\x+9, y=\y+2.0,L={(\tilde{y}'_3,\tilde{z}''_2)}]{x2x4}

  \def\x{2.0}
  \def\y{-12.0}
  \Vertex[x=\x+0, y=\y+2.0,L={(\tilde{x}'_1,\tilde{x}''_1)}]{x1x1}
  \Vertex[x=\x+3, y=\y+2.0,L={(\tilde{x}'_1,\tilde{y}''_1)}]{x1x2}
  \Vertex[x=\x+6, y=\y+2.0,L={(\tilde{x}'_1,\tilde{z}''_1)}]{x1x3}
  \Vertex[x=\x+9, y=\y+2.0,L={(\tilde{x}'_1,\tilde{z}''_2)}]{x1x4}

  \def\x{2.0}
  \def\y{-15.0}
  \Vertex[x=\x+0, y=\y+2.0,L={(\tilde{y}'_1,\tilde{x}''_1)}]{x3x1}
  \Vertex[x=\x+3, y=\y+2.0,L={(\tilde{y}'_1,\tilde{y}''_1)}]{x3x2}
  \Vertex[x=\x+6, y=\y+2.0,L={(\tilde{y}'_1,\tilde{z}''_1)}]{x3x3}
  \Vertex[x=\x+9, y=\y+2.0,L={(\tilde{y}'_1,\tilde{z}''_2)}]{x3x4}

  \def\x{2.0}
  \def\y{-18.0}
  \Vertex[x=\x+0, y=\y+2.0,L={(\tilde{y}'_2,\tilde{x}''_1)}]{x4x1}
  \Vertex[x=\x+3, y=\y+2.0,L={(\tilde{y}'_2,\tilde{y}''_1)}]{x4x2}
  \Vertex[x=\x+6, y=\y+2.0,L={(\tilde{y}'_2,\tilde{z}''_1)}]{x4x3}
  \Vertex[x=\x+9, y=\y+2.0,L={(\tilde{y}'_2,\tilde{z}''_2)}]{x4x4}

  \def\x{2.0}
  \def\y{-21.0}
  \Vertex[x=\x+0, y=\y+2.0,L={(\tilde{z}'_1,\tilde{x}''_1)}]{x5x1}
  \Vertex[x=\x+3, y=\y+2.0,L={(\tilde{z}'_1,\tilde{y}''_1)}]{x5x2}
  \Vertex[x=\x+6, y=\y+2.0,L={(\tilde{z}'_1,\tilde{z}''_1)}]{x5x3}
  \Vertex[x=\x+9, y=\y+2.0,L={(\tilde{z}'_1,\tilde{z}''_2)}]{x5x4}

  \Edge(x1x1)(x2x2) 
  \Edge(x1x2)(x2x3) 
  \Edge(x1x2)(x2x4) 

  \Edge(x1x1)(x3x2) 
  \Edge(x1x2)(x3x3) 
  \Edge(x1x2)(x3x4) 

  \Edge(x1x1)(x4x2) 
  \Edge(x1x2)(x4x3) 
  \Edge[style={in = 105, out = -30,min distance=2cm}](x1x2)(x4x4) 

  \Edge(x3x1)(x5x2) 
  \Edge[style={in = 120, out = -70,min distance=2cm}](x3x2)(x5x3) 
  \Edge[style={in = 160, out = -65,min distance=2cm}](x3x2)(x5x4) 

  \Edge(x4x1)(x5x2) 
  \Edge(x4x2)(x5x3) 
  \Edge[style={in = 165, out = -30,min distance=2cm}](x4x2)(x5x4)

  \def\x{0.75}
  \def\y{-11.25}
\draw[circle, -,dashed, very thick,rounded corners=8pt] (\x+0.1,\y+1.0)--(\x+0.1,\y+1.5)--(\x+3.1,\y+4.7)--(\x+3.6,\y+4.9)-- (\x+5.4,\y+4.8)-- (\x+5.6,\y+4.5)-- (\x+5.6,\y+4.0)-- (\x+2.5,\y+1.2)-- (\x+5.5,\y-1.5)-- (\x+6.1,\y-4.6)--(\x+6.35,\y-5.25)--(\x+11.5,\y-7.3)--(\x+11.7,\y-7.9)-- (\x+11.5,\y-8.5)-- (\x+6.5,\y-8.5)-- (\x+3.25,\y-5.5) --(\x+2.5,\y-1.5) -- (\x+0.1,\y+0.5)--(\x+0.1,\y+1.0);
  \def\x{-6.8}
  \def\y{+0.8}
  \draw[circle, -,dotted, very thick,rounded corners=8pt] (\x+0.1,\y+1.6)--(\x+0.1,\y+1.8)-- (\x+1.5,\y+1.8)-- (\x+1.5,\y+0.6)-- (\x+0.1,\y+0.6)--(\x+0.1,\y+1.6);
  \def\x{-6.8-3}
  \def\y{+0.8-3}
  \draw[circle, -,dotted, very thick,rounded corners=8pt] (\x+0.1,\y+1.6)--(\x+0.1,\y+1.8)-- (\x+1.5+6,\y+1.8)-- (\x+1.5+6,\y+0.6)-- (\x+0.1,\y+0.6)--(\x+0.1,\y+1.6);
  \def\x{-6.8-3}
  \def\y{+0.8-6}
  \draw[circle, -,dotted, very thick,rounded corners=8pt] (\x+0.1,\y+1.6)--(\x+0.1,\y+1.8)-- (\x+1.5+3,\y+1.8)-- (\x+1.5+3,\y+0.6)-- (\x+0.1,\y+0.6)--(\x+0.1,\y+1.6);
  
    \def\x{-6.8-3}
  \def\y{+0.8-3}
  \draw[circle, -,dotted, very thick,rounded corners=8pt] (\x+0.1,\y+1.6)--(\x+0.1,\y+1.8)-- (\x+1.5,\y+1.8)-- (\x+1.5,\y-0.2)-- (\x+0.1,\y-0.2)--(\x+0.1,\y+1.6);
    \def\x{-6.8}
  \def\y{+0.8-3}
  \draw[circle, -,dotted, very thick,rounded corners=8pt] (\x+0.1,\y+1.6)--(\x+0.1,\y+1.8)-- (\x+1.5,\y+1.8)-- (\x+1.5,\y-0.2)-- (\x+0.1,\y-0.2)--(\x+0.1,\y+1.6);
    \def\x{-6.8+3}
  \def\y{+0.8-3}
  \draw[circle, -,dotted, very thick,rounded corners=8pt] (\x+0.1,\y+1.6)--(\x+0.1,\y+1.8)-- (\x+1.5,\y+1.8)-- (\x+1.5,\y-0.2)-- (\x+0.1,\y-0.2)--(\x+0.1,\y+1.6);

  \def\x{-6.8-3}
  \def\y{+0.8-6}
  \draw[circle, -,dotted, very thick,rounded corners=8pt] (\x+0.1,\y+1.6)--(\x+0.1,\y+1.8)-- (\x+1.5,\y+1.8)-- (\x+1.5,\y-0.2)-- (\x+0.1,\y-0.2)--(\x+0.1,\y+1.6);
    \def\x{-6.8}
  \def\y{+0.8-6}
  \draw[circle, -,dotted, very thick,rounded corners=8pt] (\x+0.1,\y+1.6)--(\x+0.1,\y+1.8)-- (\x+1.5,\y+1.8)-- (\x+1.5,\y-0.2)-- (\x+0.1,\y-0.2)--(\x+0.1,\y+1.6);

\end{tikzpicture}
}
\end{center}
\caption{Decomposition of $B(X,Y,Z)\cong B(X,Y,Z)/X'_1/Y'_1/Y'_2/Y'_3/Z'_1\boxbackslash B(X,Y,Z)/X''_1/Y''_1/Z''_1/Z''_2$, $X'_1=\{u_{1}\},Y'_1=\{u_{2}\}, Y'_2=\{u_{3}\},Y'_3=\{u_{4}\},Z'_1=\{u_{5},u_{6}\},X''_1=\{u_{1}\},Y''_1=\{u_{2},u_3,u_4\}$, $Z''_1=\{u_{5}\},Z''_2=\{u_{6}\}$. The set $\zeta$ and the graph isomorphic to $B(X,Y,Z)$ induced by $\zeta$ in $V(B(X,Y,Z)/X'_1/Y'_1/Y'_2/Y'_3/Z_1'\boxtimes B(X,Y,Z)/X''_1/Y''_1/Z''_1/Z''_2)$ is indicated within the dotted region. 
}
  \label{BiPartiteExampleDecomposition2}
\end{figure}
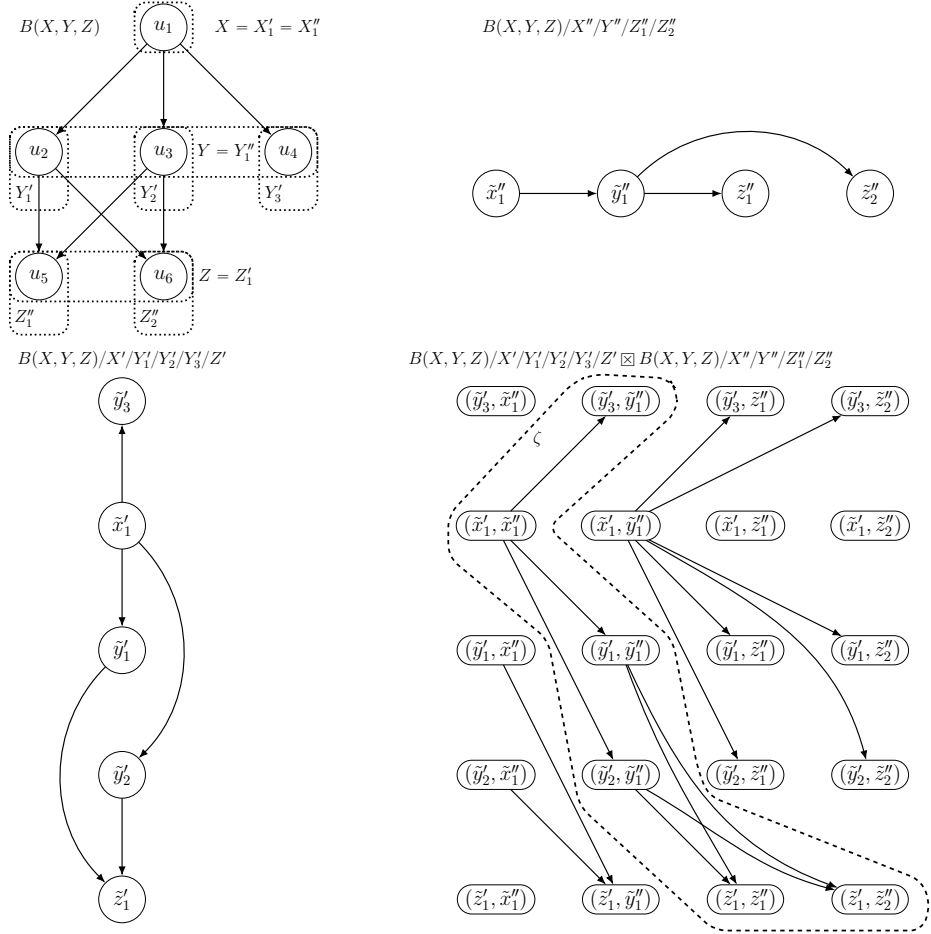
Before we continue with Lemma~\ref{lemma2} in which we decompose a 3-partite graph, we show in Figure~\ref{notdecomposable} that if the set of vertices $W''\subset Y$ of the bipartite subgraph $B(W'',Z)$ of $B(X,Y,Z)$ arc-induced by the arcs of $[Y,Z]$ is not a subset of the set of vertices $W'\subset Y$ of the bipartite subgraph $B(X,W')$ of $B(X,Y,Z)$ arc-induced by the arcs of $[X,Y]$, then this leads to the removal of the vertices $(\tilde{y}'_1,\tilde{y}''_1)$ and $(\tilde{y}'_2,\tilde{y}''_1)$ from $B(X,Y,Z)/X'/Y'_1/Y'_2/$ $Z'\boxtimes B(X,Y,Z)/X''/Y''_1/Y''_2/Y''_3/Z''$ (at the lower right of Figure~\ref{notdecomposable}) giving $B(X,Y,Z)/X'_1/Y'_1/Y'_2$ $/Z'_1\boxbackslash B(X,Y,Z)/X''_1/Y''_1/Y''_2/Y''_3/Z''_1$ (at the lower left of Figure~\ref{notdecomposable}) and, therefore, $B(X,Y,Z)\ncong B(X,Y,Z)/X'_1/Y'_1/Y'_2$ $/Z'_1\boxbackslash B(X,Y,Z)$ $/X''_1/Y''_1/Y''_2$ $/Y''_3/Z''_1$.
The vertices $(\tilde{y}_1,\tilde{y}''_1)$ and $(\tilde{y}_2,\tilde{y}''_1)$ are removed because they have $level>0$ in $B(X,Y,Z)$ $/X'_1/Y'_1/Y'_2/Z'_1\Box$ $B(X,Y,Z)/X''_1$ $/Y''_1/Y''_2/Y''_3/Z''_1$ and $level~0$ in $B(X,Y,Z)/X'_1/Y'_1$ $/Y'_2/Z'_1\boxtimes B(X,$ $Y,Z)/X''_1/Y''_1/Y''_2/Y''_3/Z''_1$.
\begin{figure}[H]
\begin{center}
\resizebox{1.0\textwidth}{!}{
\begin{tikzpicture}[->,>=Latex,shorten >=0pt,auto,node distance=2.5cm,
  main node/.style={circle,fill=blue!10,draw, font=\sffamily\Large\bfseries}]
  \tikzset{VertexStyle/.append style={
  font=\itshape\large, shape = circle,inner sep = 2pt, outer sep = 0pt,minimum size = 32 pt,draw}}
  \tikzset{LabelStyle/.append style={font = \itshape}}
  \SetVertexMath
  \def\x{0.0}
  \def\y{2.0}
\node at (\x-8.5,\y-0) {$B(X,Y,Z)$};
\node at (\x+14.5,\y-0) {$B(X,Y,Z)/X''_1/Y''_1/Y''_2/Y''_3/Z''_1$};
\node at (\x-1.5+6,\y-7.5) {$B(X,Y,Z)/X'_1/Y'_1/Y'_2/Z'_1$};
\node at (\x+16.5,\y-7.5) {$B(X,Y,Z)/X'_1/Y'_1/Y'_2/Z'_1\boxtimes B(X,Y,Z)/X''_1/Y''_1/Y''_2/Y''_3/Z''_1$};
\node at (\x+16.5-21,\y-20.5+13) {$B(X,Y,Z)/X'_1/Y'_1/Y'_2/Z'_1\boxbackslash B(X,Y,Z)/X''_1/Y''_1/Y''_2/Y''_3/Z''_1$};
\node at (\x+0.7,\y-0) {$X=X'_1=X''_1$};
\node at (\x+0.7,\y-6) {$Z=Z'_1=Z''_1$};
\node at (\x-10+8.5,\y-2.1) {$Y$};
\node at (\x-10+5.5,\y-2.5) {$Y'_1$};
\node at (\x-4+5.5,\y-2.5) {$Y'_2$};
\node at (\x-10+2.5,\y-3) {$Y''_1$};
\node at (\x-10+8.5,\y-3) {$Y''_2$};
\node at (\x-10+14.5,\y-3) {$Y''_3$};
\node at (\x+11,\y-10) {$\zeta$};

  \def\x{-10.0}
  \def\y{-2.0}
  \Vertex[x=\x+8.5, y=\y+4.0,L={u_{1}}]{u_1}
  \Vertex[x=\x+1, y=\y+1.0,L={u_{2}}]{u_2}
  \Vertex[x=\x+4, y=\y+1.0,L={u_{3}}]{u_3}
  \Vertex[x=\x+7, y=\y+1.0,L={u_{4}}]{u_4}
  \Vertex[x=\x+10, y=\y+1.0,L={u_{5}}]{u_5}
  \Vertex[x=\x+13, y=\y+1,L={u_{6}}]{u_6}
  \Vertex[x=\x+16, y=\y+1,L={u_{7}}]{u_7}
  \Vertex[x=\x+8.5, y=\y-2,L={u_{8}}]{u_8}

  \Edge(u_1)(u_4) 
  \Edge(u_1)(u_5) 
  \Edge(u_1)(u_6) 
  \Edge(u_1)(u_7) 
  \Edge(u_2)(u_8) 
  \Edge(u_3)(u_8) 
  \Edge(u_4)(u_8) 
  \Edge(u_5)(u_8) 

  \def\x{8.0}
  \def\y{-1.0}
  \Vertex[x=\x+3, y=\y+0.0,L={\tilde{x}''_1}]{u_1}
  \Vertex[x=\x+6, y=\y+0.0,L={\tilde{y}''_3}]{u_2}
  \Vertex[x=\x+9, y=\y+0.0,L={\tilde{y}''_2}]{u_3}
  \Vertex[x=\x+12, y=\y+0.0,L={\tilde{z}''_1}]{u_4}
  \Vertex[x=\x+15, y=\y+0.0,L={\tilde{y}''_1}]{u_5}

  \Edge(u_1)(u_2) 
  \Edge(u_3)(u_4) 
  \Edge[style={in = 135, out = 45,min distance=2cm}](u_1)(u_3) 
  \Edge(u_5)(u_4)

   \def\x{4.0}
  \def\y{-7.0}
  \Vertex[x=\x+0, y=\y-0.0,L={\tilde{x}'_1}]{u_1}
  \Vertex[x=\x+0, y=\y-3.0,L={\tilde{y}'_1}]{u_2}
  \Vertex[x=\x+0, y=\y-6.0,L={\tilde{y}'_2}]{u_3}
  \Vertex[x=\x+0, y=\y-9.0,L={\tilde{z}'_1}]{u_4}

  \Edge(u_1)(u_2) 
  \Edge(u_3)(u_4) 
  \Edge[style={in = 45, out = -45,min distance=2cm}](u_1)(u_3) 
  \Edge[style={in = 135, out = -135,min distance=2cm}](u_2)(u_4) 

  \tikzset{VertexStyle/.append style={
  font=\itshape\large, shape = rounded rectangle, inner sep = 2pt, outer sep = 0pt,minimum size = 20 pt,draw}}

  \def\x{11.0}
  \def\y{-9.0}
  \Vertex[x=\x+0, y=\y+2.0,L={(\tilde{x}'_1,\tilde{x}''_1)}]{x1x1}
  \Vertex[x=\x+3, y=\y+2.0,L={(\tilde{x}'_1,\tilde{y}''_3)}]{x1x2}
  \Vertex[x=\x+6, y=\y+2.0,L={(\tilde{x}'_1,\tilde{y}''_2)}]{x1x3}
  \Vertex[x=\x+9, y=\y+2.0,L={(\tilde{x}'_1,\tilde{z}''_1)}]{x1x4}
  \Vertex[x=\x+12, y=\y+2.0,L={(\tilde{x}'_1,\tilde{y}''_1)}]{x1x5}

  \def\x{11.0}
  \def\y{-12.0}
  \Vertex[x=\x+0, y=\y+2.0,L={(\tilde{y}'_1,\tilde{x}''_1)}]{x2x1}
  \Vertex[x=\x+3, y=\y+2.0,L={(\tilde{y}'_1,\tilde{y}''_3)}]{x2x2}
  \Vertex[x=\x+6, y=\y+2.0,L={(\tilde{y}'_1,\tilde{y}''_2)}]{x2x3}
  \Vertex[x=\x+9, y=\y+2.0,L={(\tilde{y}'_1,\tilde{z}''_1)}]{x2x4}
  \Vertex[x=\x+12, y=\y+2.0,L={(\tilde{y}'_1,\tilde{y}''_1)}]{x2x5}

  \def\x{11.0}
  \def\y{-15.0}
  \Vertex[x=\x+0, y=\y+2.0,L={(\tilde{y}'_2,\tilde{x}''_1)}]{x3x1}
  \Vertex[x=\x+3, y=\y+2.0,L={(\tilde{y}'_2,\tilde{y}''_3)}]{x3x2}
  \Vertex[x=\x+6, y=\y+2.0,L={(\tilde{y}'_2,\tilde{y}''_2)}]{x3x3}
  \Vertex[x=\x+9, y=\y+2.0,L={(\tilde{y}'_2,\tilde{z}''_1)}]{x3x4}
  \Vertex[x=\x+12, y=\y+2.0,L={(\tilde{y}'_2,\tilde{y}''_1)}]{x3x5}

  \def\x{11.0}
  \def\y{-18.0}
  \Vertex[x=\x+0, y=\y+2.0,L={(\tilde{z}'_1,\tilde{x}''_1)}]{x4x1}
  \Vertex[x=\x+3, y=\y+2.0,L={(\tilde{z}'_1,\tilde{y}''_3)}]{x4x2}
  \Vertex[x=\x+6, y=\y+2.0,L={(\tilde{z}'_1,\tilde{y}''_2)}]{x4x3}
  \Vertex[x=\x+9, y=\y+2.0,L={(\tilde{z}'_1,\tilde{z}''_1)}]{x4x4}
  \Vertex[x=\x+12, y=\y+2.0,L={(\tilde{z}'_1,\tilde{y}''_1)}]{x4x5}

  \def\x{11.0}
  \def\y{-21.0}

  \Edge(x1x1)(x2x2) 
  \Edge(x1x1)(x2x3) 
  \Edge(x1x3)(x2x4) 
  \Edge(x1x5)(x2x4) 

  \Edge(x1x1)(x3x2) 
  \Edge[style={in = 105, out = -30,min distance=2cm}](x1x1)(x3x3) 
  \Edge(x1x3)(x3x4) 
  \Edge(x1x5)(x3x4) 

  \Edge[style={in = 120, out = -70,min distance=2cm}](x2x1)(x4x2) 
  \Edge[style={in = 160, out = -70,min distance=2cm}](x2x1)(x4x3) 
  \Edge(x2x3)(x4x4) 
  \Edge(x2x5)(x4x4) 

  \Edge(x3x1)(x4x2) 
  \Edge[style={in = 160, out = -45,min distance=2cm}](x3x1)(x4x3) 
  \Edge(x3x3)(x4x4) 
  \Edge(x3x5)(x4x4) 

\def\u{-20}
\def\v{13}
  \def\x{11.0+\u}
  \def\y{-22.0+\v}
  \Vertex[x=\x+0, y=\y+2.0,L={(\tilde{x}'_1,\tilde{x}''_1)}]{x1x1}

  \def\x{11.0+\u}
  \def\y{-25.0+\v}
  \Vertex[x=\x+3, y=\y+2.0,L={(\tilde{y}'_1,\tilde{y}''_3)}]{x2x2}
  \Vertex[x=\x+6, y=\y+2.0,L={(\tilde{y}'_1,\tilde{y}''_2)}]{x2x3}

  \def\x{11.0+\u}
  \def\y{-28.0+\v}
  \Vertex[x=\x+3, y=\y+2.0,L={(\tilde{y}'_2,\tilde{y}''_3)}]{x3x2}
  \Vertex[x=\x+6, y=\y+2.0,L={(\tilde{y}'_2,\tilde{y}''_2)}]{x3x3}

  \def\x{11.0+\u}
  \def\y{-31.0+\v}
  \Vertex[x=\x+9, y=\y+2.0,L={(\tilde{z}'_1,\tilde{z}''_1)}]{x4x4}

  \def\x{11.0+\u}
  \def\y{-34.0+\v}

  \Edge(x1x1)(x2x2) 
  \Edge(x1x1)(x2x3) 

  \Edge(x1x1)(x3x2) 
  \Edge[style={in = 105, out = -30,min distance=2cm}](x1x1)(x3x3) 

  \Edge(x2x3)(x4x4) 
  \Edge(x3x3)(x4x4) 

\node at (-11.0,-1.0) {$W''$};
\node at (8.0,-1.0) {$W'$};

  \def\x{-10.5}
  \def\y{+0.7-3}
  \draw[circle, -,dashed, very thick,rounded corners=8pt] (\x+0.1-1,\y+2.25)--(\x+0.1-1,\y+2.45)-- (\x+11.5,\y+2.45)-- (\x+11.5,\y+0.15)-- (\x+0.1-1,\y+0.15)--(\x+0.1-1,\y+2.25);

  \def\x{-3.0}
  \def\y{+0.6-3}
  \draw[circle, -,dashed, very thick,rounded corners=8pt] (\x+0.1-1,\y+2.5)--(\x+0.1-1,\y+2.7)-- (\x+11.5,\y+2.7)-- (\x+11.5,\y+0.1)-- (\x+0.1-1,\y+0.1)--(\x+0.1-1,\y+2.5);

  \def\x{6.75}
  \def\y{-11.25}
\draw[circle, -,dashed, very thick,rounded corners=8pt] (\x+3.1,\y+4.1)--(\x+3.1,\y+4.7)--(\x+3.6,\y+4.9)-- (\x+5.4,\y+4.8)-- (\x+5.6,\y+4.5)-- (\x+5.6,\y+4.0)-- (\x+7.0,\y+3.2)-- (\x+11.5,\y+1.8)-- (\x+11.6,\y-0.6)--(\x+11.9,\y-1.8)--(\x+12.2,\y-2.4)--(\x+14.4,\y-2.4)--(\x+15.1,\y-0.4+2.2)--(\x+17.6,\y-0.4+2.2)--(\x+17.6,\y-0.4+1.6)--(\x+17.6,\y-2.0)--(\x+14.0,\y-5.4)--(\x+12.5,\y-5.4)--(\x+11.5,\y-3.9)-- (\x+6.0,\y-2.2)-- (\x+6.0,\y-1.8)-- (\x+5.4,\y+1.8) --(\x+3.0,\y+3.8) --(\x+3.1,\y+4.1);
  \def\x{-2.3}
  \def\y{+0.8}
  \draw[circle, -,dotted, very thick,rounded corners=8pt] (\x+0.1,\y+1.6)--(\x+0.1,\y+1.8)-- (\x+1.5,\y+1.8)-- (\x+1.5,\y+0.6)-- (\x+0.1,\y+0.6)--(\x+0.1,\y+1.6);
  \def\x{-2.3}
  \def\y{+0.8-6}
  \draw[circle, -,dotted, very thick,rounded corners=8pt] (\x+0.1,\y+1.6)--(\x+0.1,\y+1.8)-- (\x+1.5,\y+1.8)-- (\x+1.5,\y+0.6)-- (\x+0.1,\y+0.6)--(\x+0.1,\y+1.6);
  \def\x{-6.8-3}
  \def\y{+0.8-3}
  \draw[circle, -,dotted, very thick,rounded corners=8pt] (\x+0.1-0.4,\y+2.0)--(\x+0.1-0.4,\y+2.2)-- (\x+1.5+6.4,\y+2.2)-- (\x+1.5+6.4,\y+0.2)-- (\x+0.1-0.4,\y+0.2)--(\x+0.1-0.4,\y+2.0);
  \def\x{-6.8-3+9}
  \def\y{+0.8-3}
  \draw[circle, -,dotted, very thick,rounded corners=8pt] (\x+0.1-0.4,\y+2.0)--(\x+0.1-0.4,\y+2.2)-- (\x+1.5+6.4,\y+2.2)-- (\x+1.5+6.4,\y+0.2)-- (\x+0.1-0.4,\y+0.2)--(\x+0.1-0.4,\y+2.0);
  
    \def\x{-2.3-7.5}
  \def\y{+0.8-3}
  \draw[circle, -,dotted, very thick,rounded corners=8pt] (\x+0.1,\y+1.6)--(\x+0.1,\y+1.8)-- (\x+4.5,\y+1.8)-- (\x+4.5,\y+0.6)-- (\x+0.1,\y+0.6)--(\x+0.1,\y+1.6);

    \def\x{-2.3-1.5}
  \def\y{+0.8-3}
  \draw[circle, -,dotted, very thick,rounded corners=8pt] (\x+0.1,\y+1.6)--(\x+0.1,\y+1.8)-- (\x+4.5,\y+1.8)-- (\x+4.5,\y+0.6)-- (\x+0.1,\y+0.6)--(\x+0.1,\y+1.6);

    \def\x{-2.3+4.5}
  \def\y{+0.8-3}
  \draw[circle, -,dotted, very thick,rounded corners=8pt] (\x+0.1,\y+1.6)--(\x+0.1,\y+1.8)-- (\x+4.5,\y+1.8)-- (\x+4.5,\y+0.6)-- (\x+0.1,\y+0.6)--(\x+0.1,\y+1.6);
  
    \def\x{-6.8-4+0.5}
  \def\y{+0.8-3}
  \draw[circle, -,dotted, very thick,rounded corners=8pt] (\x+0.1-0.4,\y+2.4)--(\x+0.1-0.4,\y+2.6)-- (\x+1.5+6.4+10,\y+2.6)-- (\x+1.5+6.4+10,\y+0.2-0.4)-- (\x+0.1-0.4,\y+0.2-0.4)--(\x+0.1-0.4,\y+2.4);

\end{tikzpicture}
}
\end{center}
\caption{The set of vertices $W''=\{u_2,u_3,u_4,u_5\}$ of the bipartite subgraph $B(W'',Z)$ of $B(X,Y,Z)$ arc-induced by the arcs of $[Y,Z]$ is not a subset of the set of vertices $W'=\{u_4,u_5,u_6,u_7\}$ of the bipartite subgraph $B(X,W')$ of $B(X,Y,Z)$ arc-induced by the arcs of $[X,Y]$. Although, the graph induced by the set of vertices $\zeta$ is isomorphic to $B(X,Y,Z)$, we have that $B(X,Y,Z)\ncong B(X,Y,Z)/X'_1/Y'_1/Y'_2/Z'_1\boxbackslash B(X,Y,Z)/X''_1/Y''_1/Y''_2/Y''_3/Z''_1$ (due to the removal of the vertices $(\tilde{y}'_1,\tilde{y}''_1)$ and $(\tilde{y}'_2,\tilde{y}''_1)$) by the VRSP.}
  \label{notdecomposable}
\end{figure}
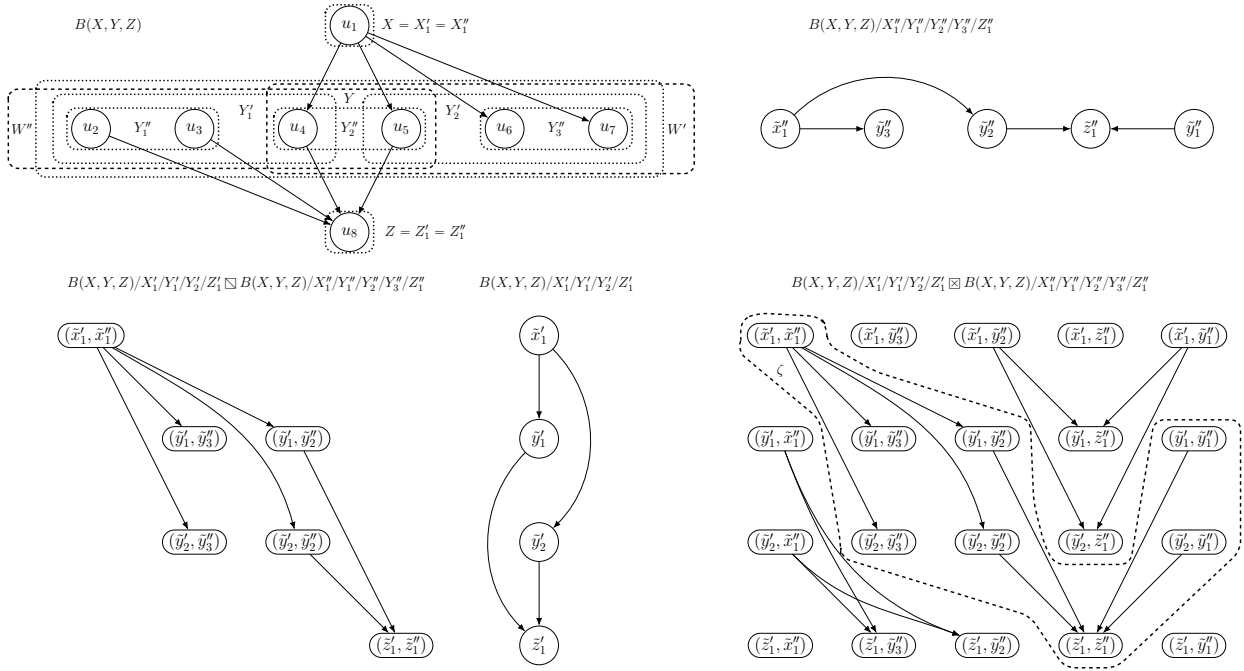
Furthermore, in Figure~\ref{notdecomposable1}, we show that when the requirement of Lemma~\ref{lemma2} that $[X,Y]$ and $[Y,Z]$ contain the only arcs of $B(X,Y,Z)$ is violated, we have  that the graph induced by the vertices of $\zeta$ is not isomorphic to $B(X,Y,Z)$. Hence, $B(X,Y,Z)\ncong B(X,Y,Z)/X'_1/Y'_{1}/Y'_{2}/Z'_1\boxbackslash B(X,Y,Z)/X''_1/Y''_{1}/Y''_{2}/Z''_1$. Note that $(\tilde{x}'_1,\tilde{y}''_2),(\tilde{y}'_1,\tilde{x}''_2),(\tilde{y}'_1,\tilde{x}''_1),(\tilde{y}'_2,\tilde{x}''_2)$ and $(\tilde{y}'_2,\tilde{x}''_1)$, and the arcs of which $(\tilde{x}'_1,\tilde{y}''_2),(\tilde{y}'_1,\tilde{x}''_2),(\tilde{y}'_1,\tilde{x}''_1),(\tilde{y}'_2,\tilde{x}''_2)$ and $(\tilde{y}'_2,\tilde{x}''_1)$ are an end are removed from $B(X,Y,Z)$ $/X'_1/Y'_{1}/Y'_{2}/Z'_1\boxtimes B(X,Y,Z)/X''_1$ $/Y''_{1}/Y''_{2}/Z''_1$ by the VRSP.

\begin{figure}[H]
\begin{center}
\resizebox{1.0\textwidth}{!}{
\begin{tikzpicture}[->,>=Latex,shorten >=0pt,auto,node distance=2.5cm,
  main node/.style={circle,fill=blue!10,draw, font=\sffamily\Large\bfseries}]
  \tikzset{VertexStyle/.append style={
  font=\itshape\large, shape = circle,inner sep = 2pt, outer sep = 0pt,minimum size = 32 pt,draw}}
  \tikzset{LabelStyle/.append style={font = \itshape}}
  \SetVertexMath
  \def\x{2.0}
  \def\y{2.0}
\node at (\x-4.5,\y+1) {$B(X,Y,Z)$};
\node at (\x+10.5,\y-0) {$B(X,Y,Z)/X''_1/X''_2/Y''_{1}/Y''_{2}/Z''_1$};
\node at (\x-1.5,\y-9) {$B(X,Y,Z)/X'_1/Y'_{1}/Y'_{2}/Z'_1$};
\node at (\x+12.5,\y-7.5) {$B(X,Y,Z)/X'_1/Y'_{1}/Y'_{2}/Z'_1\boxtimes B(X,Y,Z)/X''_1/X''_2/Y''_{1}/Y''_{2}/Z''_1$};
\node at (\x-3.0,\y-0) {$X=X'_1$};
\node at (\x-4.4,\y-0.85) {$X''_1$};
\node at (\x-1.5,\y-0.85) {$X''_2$};
\node at (\x+0.7,\y-6) {$Z=Z'_1=Z''_1$};
\node at (\x-10+6.1,\y-2) {$Y$};
\node at (\x-4+0.8,\y-3) {$Y'_1$};
\node at (\x+0.3,\y-3) {$Y'_2$};
\node at (\x-10+8.5,\y-2) {$Y''_1$};
\node at (\x-10+8.5,\y-4) {$Y''_2$};
\node at (\x+11,\y-10.5) {$\zeta$};

  \def\x{-8.0}
  \def\y{-2.0}
  \Vertex[x=\x+8.5, y=\y+4.0,L={u_{1,2}}]{u_1}
  \Vertex[x=\x+5.5, y=\y+4.0,L={u_{1,1}}]{u_2}
  \Vertex[x=\x+7, y=\y+0.0,L={u_{3,1}}]{u_4}
  \Vertex[x=\x+10, y=\y+0.0,L={u_{3,2}}]{u_5}
  \Vertex[x=\x+7, y=\y+2,L={u_{2,1}}]{u_6}
  \Vertex[x=\x+10, y=\y+2,L={u_{2,2}}]{u_7}
  \Vertex[x=\x+8.5, y=\y-2,L={u_{4,4}}]{u_8}

  \Edge[style={in = 180, out = -105,min distance=2cm}](u_2)(u_8) 

  \Edge(u_1)(u_4) 
  \Edge(u_1)(u_5) 
  \Edge(u_1)(u_6) 
  \Edge(u_1)(u_7) 
  \Edge(u_4)(u_8) 
  \Edge(u_5)(u_8) 

  \def\x{8.0}
  \def\y{-1.0}
  \Vertex[x=\x+3, y=\y+0.0,L={\tilde{x}_2''}]{u_1}
  \Vertex[x=\x+6, y=\y+0.0,L={\tilde{y}''_{1}}]{u_2}
  \Vertex[x=\x+9, y=\y+0.0,L={\tilde{y}''_{2}}]{u_3}
  \Vertex[x=\x+12, y=\y+0.0,L={\tilde{z}''_1}]{u_4}
  \Vertex[x=\x+15, y=\y+0.0,L={\tilde{x}_1''}]{u_5}

  \Edge(u_1)(u_2) 
  \Edge(u_3)(u_4) 
  \Edge[style={in = 135, out = 45,min distance=2cm}](u_1)(u_3) 
  \Edge(u_5)(u_4)

   \def\x{4.0}
  \def\y{-7.0}
  \Vertex[x=\x+0, y=\y-0.0,L={\tilde{x}'_1}]{u_1}
  \Vertex[x=\x+0, y=\y-3.0,L={\tilde{y}'_{1}}]{u_2}
  \Vertex[x=\x+0, y=\y-6.0,L={\tilde{y}'_{2}}]{u_3}
  \Vertex[x=\x+0, y=\y-9.0,L={\tilde{z}'_1}]{u_4}

  \Edge(u_1)(u_2) 
  \Edge(u_3)(u_4) 
  \Edge[style={in = 45, out = -45,min distance=2cm}](u_1)(u_3) 
  \Edge[style={in = 135, out = -135,min distance=2cm}](u_2)(u_4) 
  \Edge[style={in = 135, out = -135,min distance=2cm}](u_1)(u_4) 

  \tikzset{VertexStyle/.append style={
  font=\itshape\large, shape = rounded rectangle, inner sep = 2pt, outer sep = 0pt,minimum size = 20 pt,draw}}

  \def\x{11.0}
  \def\y{-9.0}
  \Vertex[x=\x+0, y=\y+2.0,L={(\tilde{x}'_1,\tilde{x}_2'')}]{x1x1}
  \Vertex[x=\x+3, y=\y+2.0,L={(\tilde{x}'_1,\tilde{y}''_{1})}]{x1x2}
  \Vertex[x=\x+6, y=\y+2.0,L={(\tilde{x}'_1,\tilde{y}''_{2})}]{x1x3}
  \Vertex[x=\x+9, y=\y+2.0,L={(\tilde{x}'_1,\tilde{z}''_1)}]{x1x4}
  \Vertex[x=\x+12, y=\y+2.0,L={(\tilde{x}'_1,\tilde{x}_1'')}]{x1x5}

  \def\x{11.0}
  \def\y{-12.0}
  \Vertex[x=\x+0, y=\y+2.0,L={(\tilde{y}'_{_1},\tilde{x}_2'')}]{x2x1}
  \Vertex[x=\x+3, y=\y+2.0,L={(\tilde{y}'_{1},\tilde{y}''_{1})}]{x2x2}
  \Vertex[x=\x+6, y=\y+2.0,L={(\tilde{y}'_{1},\tilde{y}''_{2})}]{x2x3}
  \Vertex[x=\x+9, y=\y+2.0,L={(\tilde{y}'_{1},\tilde{z}''_1)}]{x2x4}
  \Vertex[x=\x+12, y=\y+2.0,L={(\tilde{y}'_{1},\tilde{x}_1'')}]{x2x5}

  \def\x{11.0}
  \def\y{-15.0}
  \Vertex[x=\x+0, y=\y+2.0,L={(\tilde{y}'_2,\tilde{x}_2'')}]{x3x1}
  \Vertex[x=\x+3, y=\y+2.0,L={(\tilde{y}'_{2},\tilde{y}''_1)}]{x3x2}
  \Vertex[x=\x+6, y=\y+2.0,L={(\tilde{y}'_{2},\tilde{y}''_{2})}]{x3x3}
  \Vertex[x=\x+9, y=\y+2.0,L={(\tilde{y}'_{2},\tilde{z}''_1)}]{x3x4}
  \Vertex[x=\x+12, y=\y+2.0,L={(\tilde{y}'_{2},\tilde{x}_1'')}]{x3x5}

  \def\x{11.0}
  \def\y{-18.0}
  \Vertex[x=\x+0, y=\y+2.0,L={(\tilde{z}'_1,\tilde{x}_2'')}]{x4x1}
  \Vertex[x=\x+3, y=\y+2.0,L={(\tilde{z}'_1,\tilde{y}''_1)}]{x4x2}
  \Vertex[x=\x+6, y=\y+2.0,L={(\tilde{z}'_1,\tilde{y}''_2)}]{x4x3}
  \Vertex[x=\x+9, y=\y+2.0,L={(\tilde{z}'_1,\tilde{z}''_1)}]{x4x4}
  \Vertex[x=\x+12, y=\y+2.0,L={(\tilde{z}'_1,\tilde{x}_1'')}]{x4x5}

  \def\x{11.0}
  \def\y{-21.0}

  \Edge(x1x1)(x2x2) 
  \Edge(x1x1)(x2x3) 
  \Edge[style={in =175, out = -130,min distance=2cm}](x1x1)(x4x2) 
  \Edge[style={in =120, out = -60,min distance=7cm}](x1x1)(x4x3) 
  \Edge(x1x3)(x2x4) 
  \Edge(x1x5)(x2x4) 

  \Edge(x1x1)(x3x2) 
  \Edge[style={in = 105, out = -30,min distance=2cm}](x1x1)(x3x3) 
  \Edge(x1x3)(x3x4) 
  \Edge[style={in = 115, out = -65,min distance=2cm}](x1x3)(x4x4) 
  \Edge(x1x5)(x3x4) 

  \Edge[style={in = 120, out = -70,min distance=2cm}](x2x1)(x4x2) 
  \Edge[style={in = 160, out = -70,min distance=2cm}](x2x1)(x4x3) 
  \Edge(x2x3)(x4x4) 
  \Edge(x2x5)(x4x4) 
  \Edge(x1x5)(x4x4) 

  \Edge(x3x1)(x4x2) 
  \Edge[style={in = 160, out = -45,min distance=2cm}](x3x1)(x4x3) 
  \Edge(x3x3)(x4x4) 
  \Edge(x3x5)(x4x4) 

  \def\x{6.75}
  \def\y{-11.25}
\draw[circle, -,dashed, very thick,rounded corners=8pt] (\x+3.1,\y+4.1)--(\x+3.1,\y+4.7)--(\x+3.6,\y+4.9)-- (\x+5.4,\y+4.8)-- (\x+5.6,\y+4.5)-- (\x+5.6,\y+4.0)-- (\x+7.0,\y+3.2)-- (\x+11.5,\y+1.8)-- (\x+14.5,\y+1.8)--(\x+15.2,\y+4.9)--(\x+17.5,\y+4.9)-- (\x+17.5,\y+3.9)-- (\x+15.0,\y-0.6+2)--(\x+14.7,\y-1.8)--(\x+14.4,\y-5.4)--(\x+6.0,\y-5.4)--(\x+6.0,\y-3.9)--  (\x+5.4,\y+1.8) --(\x+3.0,\y+3.8) --(\x+3.1,\y+4.1);
  \def\x{-2.3+2}
  \def\y{+0.8}
  \draw[circle, -,dotted, very thick,rounded corners=8pt] (\x+0.1,\y+1.6)--(\x+0.1,\y+1.8)-- (\x+1.5,\y+1.8)-- (\x+1.5,\y+0.1)-- (\x+0.1,\y+0.1)--(\x+0.1,\y+1.6);
    \def\x{-2.3-1}
  \def\y{+0.8}
  \draw[circle, -,dotted, very thick,rounded corners=8pt] (\x+0.1,\y+1.6)--(\x+0.1,\y+1.8)-- (\x+1.5,\y+1.8)-- (\x+1.5,\y+0.1)-- (\x+0.1,\y+0.1)--(\x+0.1,\y+1.6);
    \def\x{-2.3+2}
  \def\y{+0.8-6}
  \draw[circle, -,dotted, very thick,rounded corners=8pt] (\x+0.1,\y+1.6)--(\x+0.1,\y+1.8)-- (\x+1.5,\y+1.8)-- (\x+1.5,\y+0.6)-- (\x+0.1,\y+0.6)--(\x+0.1,\y+1.6);

    \def\x{-2.3-1.5+2}
  \def\y{+0.8-4}
  \draw[circle, -,dotted, very thick,rounded corners=8pt] (\x+0.1,\y+1.6+2)--(\x+0.1,\y+1.8+2)-- (\x+1.5,\y+1.8+2)-- (\x+1.5,\y+0.6)-- (\x+0.1,\y+0.6)--(\x+0.1,\y+1.6+2);

     \def\x{-0.3-0.5+2}
  \def\y{+0.8-4}
  \draw[circle, -,dotted, very thick,rounded corners=8pt] (\x+0.1,\y+1.6+2)--(\x+0.1,\y+1.8+2)-- (\x+1.5,\y+1.8+2)-- (\x+1.5,\y+0.6)-- (\x+0.1,\y+0.6)--(\x+0.1,\y+1.6+2);

    \def\x{-2.3-1.5+2}
  \def\y{+0.8-2}
  \draw[circle, -,dotted, very thick,rounded corners=8pt] (\x+0.1,\y+1.6)--(\x+0.1,\y+1.8)-- (\x+4.5,\y+1.8)-- (\x+4.5,\y+0.6)-- (\x+0.1,\y+0.6)--(\x+0.1,\y+1.6);
    \def\x{-2.3-1.0}
  \def\y{+0.8-0}
  \draw[circle, -,dotted, very thick,rounded corners=8pt] (\x+0.1,\y+1.6)--(\x+0.1,\y+1.8)-- (\x+4.5,\y+1.8)-- (\x+4.5,\y+0.6)-- (\x+0.1,\y+0.6)--(\x+0.1,\y+1.6);

    \def\x{-2.3-1.5+2}
  \def\y{+0.8-4}
  \draw[circle, -,dotted, very thick,rounded corners=8pt] (\x+0.1,\y+1.6)--(\x+0.1,\y+1.8)-- (\x+4.5,\y+1.8)-- (\x+4.5,\y+0.6)-- (\x+0.1,\y+0.6)--(\x+0.1,\y+1.6);
    \def\x{-0.3-4+0.5+2}
  \def\y{+0.8-3}
  \draw[circle, -,dotted, very thick,rounded corners=8pt] (\x+0.1-0.4,\y+3.4-0.5)--(\x+0.1-0.4,\y+3.6-0.5)-- (\x+1.5+4.4-1,\y+3.6-0.5)-- (\x+1.5+4.4-1,\y+0.2-1.4+0.5)-- (\x+0.1-0.4,\y+0.2-1.4+0.5)--(\x+0.1-0.4,\y+3.4-0.5);

\end{tikzpicture}
}
\end{center}
\caption{The requirement that $[X,Y]$ and $[Y,Z]$ contain the only arcs of $B(X,Y,Z)$ is violated, giving that the graph induced by the vertices of $\zeta$ is not isomorphic to $B(X,Y,Z)$. Hence, $B(X,Y,Z)\ncong B(X,Y,Z)/X'/Y'_{1}/Y'_{2}/Z'\boxbackslash B(X,Y,Z)/X''/Y''_{1}/Y''_{2}/Z''$. Furthermore, $(\tilde{x}'_1,\tilde{y}''_2),(\tilde{y}'_1,\tilde{x}''_2),(\tilde{y}'_1,\tilde{x}''_1),(\tilde{y}'_2,\tilde{x}''_2)$ and $(\tilde{y}'_2,\tilde{x}''_1)$, and the arcs of which $(\tilde{x}'_1,\tilde{y}''_2),(\tilde{y}'_1,\tilde{x}''_2),(\tilde{y}'_1,\tilde{x}''_1),(\tilde{y}'_2,\tilde{x}''_2)$ and $(\tilde{y}'_2,\tilde{x}''_1)$ are an end are removed from $B(X,Y,Z)/X'/Y'_{1}/Y'_{2}/Z'\boxtimes B(X,Y,Z)/X''$ $/Y''_{1}/Y''_{2}/Z''$ by the VRSP.}
  \label{notdecomposable1}
\end{figure}
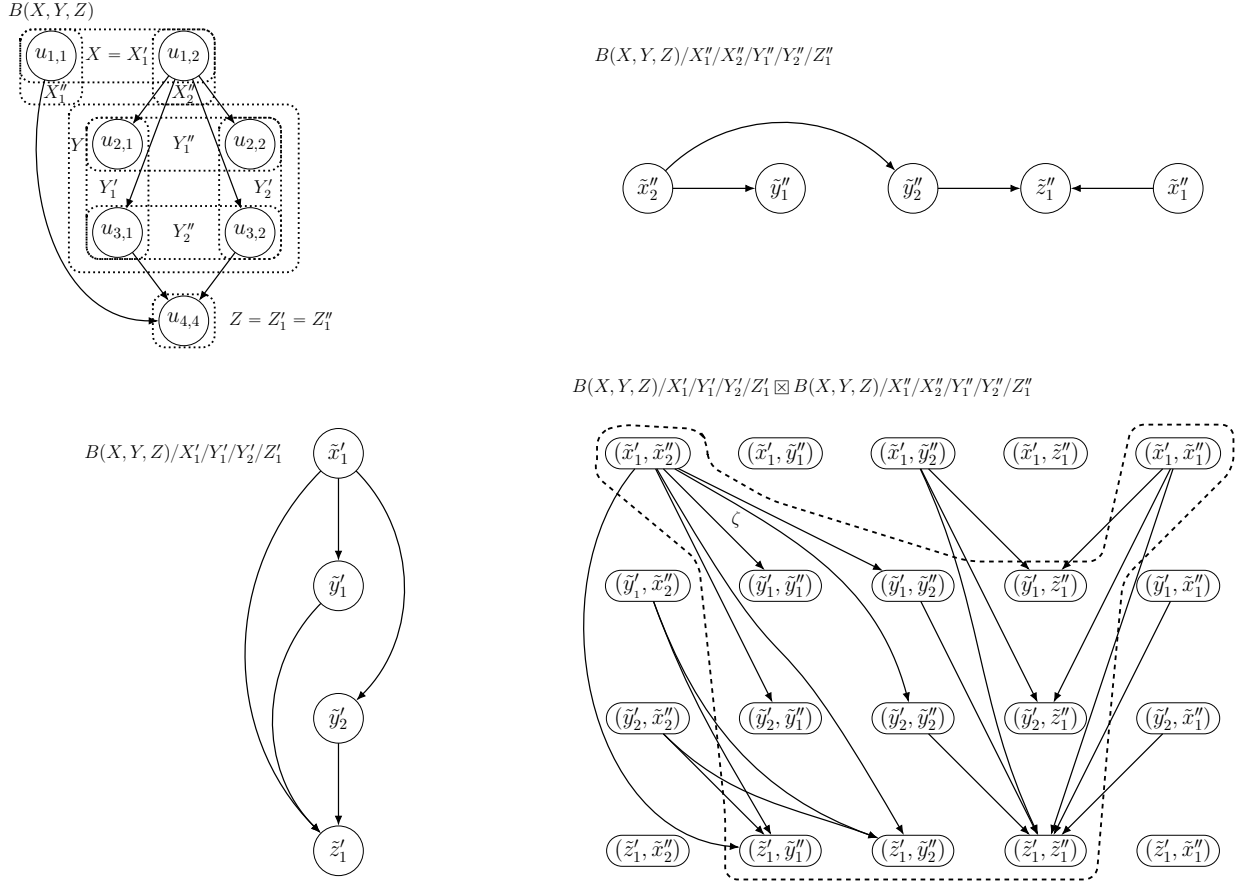
The requirement that $[X,Y]$ and $[Y,Z]$ contain only forward arcs, or $[X,Y]$ and $[Y,Z]$ contain only backward arcs must not be violated.
Otherwise, for example, if $[X,Y]$ contains only forward arcs and $[Y,Z]$ contains only backward arcs, we have a bipartite graph $B(X\cup Z,Y)$ where $[X\cup Z,Y]$ contains only forward arcs.
Because such a graph is not complete it is also not decomposable by Lemma~6.1 in  \cite{mod-arxiv} as is shown by the example given in Figure~4 in \cite{mod-arxiv}.
Now, for the decomposition of a 3-partite graph $B(X,Y,Z)$, we have the following lemma.
\begin{lemma}\label{lemma2}
Let $B(X,Y,Z)$ be a weakly connected 3-partite graph where the labels of all arcs are the same.
Let $[X,Y]$ and $[Y,Z]$ contain the only arcs of $B(X,Y,Z)$.
Let $[X,Y]$ and $[Y,Z]$ contain only forward arcs, or let $[X,Y]$ and $[Y,Z]$ contain only backward arcs.
Let $B(X,Y)$ be a complete bipartite subgraph of $B(X,Y,Z)$ induced by $X\cup Y$.
Let $B(W,Z), W\subseteq Y,$ be a complete bipartite subgraph of $B(X,Y,Z)$ arc-induced by all arcs of $[Y,Z]$.
Let $|X|=c_1\cdot c_2,GCD(|Y|,|W|)=c_3,|Y|=c_3\cdot c_4,|W|=c_3\cdot c_7,|Z|=c_5\cdot c_6,c_1,\ldots,c_7\in~\hspace{-5pt}\mathbb{N}$.
%
Then there exist $X'_g,X''_h, Y'_i,Y''_j,Z'_k,Z''_l,$ such that $B(X,Y,Z)\cong B(X,Y,Z)\overunderslash{g=1}{c_1}X'_g\overunderslash{i=1}{c_3}Y'_i\overunderslash{k=1}{c_5}Z'_k\boxbackslash B(X,Y,Z)\overunderslash{h=1}{c_2}X''_h\overunderslash{j=1}{c_4}Y''_j\overunderslash{l=1}{c_6}Z''_l$.
\end{lemma}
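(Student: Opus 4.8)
The plan is to reproduce the scheme of the proof of Lemma~\ref{lemma1}: fix suitable partitions of $X$, $Y$ and $Z$, define a vertex map $\phi$, show that its image $\zeta$ induces a copy of $B(X,Y,Z)$ in the intermediate product $B(X,Y,Z)\overunderslash{g=1}{c_1}X'_g\overunderslash{i=1}{c_3}Y'_i\overunderslash{k=1}{c_5}Z'_k\boxtimes B(X,Y,Z)\overunderslash{h=1}{c_2}X''_h\overunderslash{j=1}{c_4}Y''_j\overunderslash{l=1}{c_6}Z''_l$, and then show that the vertex-removing step of the VRSP deletes exactly the vertices outside $\zeta$. Since $B(X,Y)$ is complete we may write $X=\{u_{g,h}\mid 1\le g\le c_1,\ 1\le h\le c_2\}$ with $X'_g=\{u_{g,1},\dots,u_{g,c_2}\}$ and $X''_h=\{u_{1,h},\dots,u_{c_1,h}\}$, and likewise $Z=\{w_{k,l}\}$ with $Z'_k$ and $Z''_l$ the rows and columns, exactly as in Lemma~\ref{lemma1}. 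The one genuinely new point is the enumeration of $Y$: since $W\subseteq Y$, $|Y|=c_3c_4$, $|W|=c_3c_7$ and $c_7\le c_4$, I would enumerate $Y=\{v_{i,j}\mid 1\le i\le c_3,\ 1\le j\le c_4\}$ so that $W=\{v_{i,j}\mid 1\le i\le c_3,\ 1\le j\le c_7\}$, and take $Y'_i=\{v_{i,1},\dots,v_{i,c_4}\}$ (the $c_3$ rows, of size $c_4$) and $Y''_j=\{v_{1,j},\dots,v_{c_3,j}\}$ (the $c_4$ columns, of size $c_3$). Then $Y'_i\cap W\ne\emptyset$ for every $i$, while $Y''_j\subseteq W$ for $j\le c_7$ and $Y''_j\cap W=\emptyset$ for $j>c_7$; this is precisely the compatibility between the $[X,Y]$-contraction and the $[Y,Z]$-contraction whose failure causes the unwanted vertex removals of Figure~\ref{notdecomposable}. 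Define $\phi(u_{g,h})=(\tilde x'_g,\tilde x''_h)$, $\phi(v_{i,j})=(\tilde y'_i,\tilde y''_j)$, $\phi(w_{k,l})=(\tilde z'_k,\tilde z''_l)$ and let $\zeta$ be the image of $\phi$.

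Next I would prove the analogue of Claim~\ref{claim1}: $\phi$ is an isomorphism from $B(X,Y,Z)$ onto the subgraph of the intermediate product induced by $\zeta$. The map is clearly a bijection onto $\zeta$. Because all arcs carry the same label, every arc of either factor is a synchronising arc, so the arcs of the intermediate product are exactly the diagonal arcs obtained from an arc of the left factor together with an arc of the right factor. An arc $u_{g,h}v_{i,j}$ --- all of which are present, $B(X,Y)$ being complete --- lifts to $\tilde x'_g\tilde y'_i$ on the left and to $\tilde x''_h\tilde y''_j$ on the right, hence to the diagonal arc $(\tilde x'_g,\tilde x''_h)(\tilde y'_i,\tilde y''_j)$, and conversely these are the only diagonal arcs from the $\tilde x$-block to the $\tilde y$-block. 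An arc $v_{i,j}w_{k,l}$ is present iff $v_{i,j}\in W$, i.e.\ iff $j\le c_7$; and $(\tilde y'_i,\tilde y''_j)(\tilde z'_k,\tilde z''_l)$ is a diagonal arc iff $\tilde y'_i\tilde z'_k$ is an arc of the left factor, which holds for every $i$ since $Y'_i\cap W\ne\emptyset$ and $B(W,Z)$ is complete, and $\tilde y''_j\tilde z''_l$ is an arc of the right factor, which holds iff $Y''_j\cap W\ne\emptyset$, i.e.\ iff $j\le c_7$ --- so exactly when $v_{i,j}\in W$. Finally, no other diagonal arcs occur among the vertices of $\zeta$: $X$, $Y$ and $Z$ are independent, $[X,Z]$ is empty, and $[X,Y]$, $[Y,Z]$ point forward, so in each factor the only arcs run from the $\tilde x$-block to the $\tilde y$-block and from the $\tilde y$-block to the $\tilde z$-block. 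Hence $\phi$ preserves arcs and labels in both directions, which proves the claim.

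It then remains, as in Lemma~\ref{lemma1}, to run the level argument. In the Cartesian product the level of $(\tilde a,\tilde b)$ is the sum of the levels of $\tilde a$ and $\tilde b$ in the two factors, and in each factor the $\tilde x$-, $\tilde y$- and $\tilde z$-vertices have levels $0$, $1$ and $2$; hence every off-diagonal vertex has level $>0$ there. In the intermediate product, since the only arcs are diagonal, each of $(\tilde x'_g,\tilde y''_j)$, $(\tilde x'_g,\tilde z''_l)$, $(\tilde y'_i,\tilde x''_h)$, $(\tilde z'_k,\tilde x''_h)$ has no in-arc at all --- one of its two coordinates is a source of its factor --- so it has level $0$ and is removed together with its out-arcs. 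After that removal the only in-arcs of a vertex $(\tilde y'_i,\tilde z''_l)$ were the diagonal arcs from some $(\tilde x'_g,\tilde y''_j)$, and the only in-arcs of $(\tilde z'_k,\tilde y''_j)$ were the diagonal arcs from some $(\tilde y'_i,\tilde x''_h)$, so both remaining off-diagonal families now have level $0$ and are removed in turn. What survives is exactly the subgraph induced by $\zeta$: every $(\tilde x'_g,\tilde x''_h)$ has level $0$ in the Cartesian product and is never eligible for removal, each $(\tilde y'_i,\tilde y''_j)$ keeps the in-arc from $(\tilde x'_g,\tilde x''_h)$, and each $(\tilde z'_k,\tilde z''_l)$ keeps an in-arc from some $(\tilde y'_i,\tilde y''_j)$ with $j\le c_7$, so these three families retain level $>0$ throughout. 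Hence $B(X,Y,Z)\cong B(X,Y,Z)\overunderslash{g=1}{c_1}X'_g\overunderslash{i=1}{c_3}Y'_i\overunderslash{k=1}{c_5}Z'_k\boxbackslash B(X,Y,Z)\overunderslash{h=1}{c_2}X''_h\overunderslash{j=1}{c_4}Y''_j\overunderslash{l=1}{c_6}Z''_l$, and the case where $[X,Y]$ and $[Y,Z]$ contain only backward arcs is symmetric.

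The step I expect to be the main obstacle is the choice of the $Y$-enumeration and the check that it makes $\zeta$ induce precisely $B(X,Y,Z)$: one must see that $W$ can be taken to be a union of columns $Y''_j$ (which is where the cardinality hypotheses on $|Y|$ and $|W|$ through $c_3$ are used) while still being met by every row $Y'_i$, so that the left-factor $[Y,Z]$-part stays complete and the diagonal arc $(\tilde y'_i,\tilde y''_j)(\tilde z'_k,\tilde z''_l)$ appears exactly for $v_{i,j}\in W$. This is also the hypothesis separating the decomposable case from the configurations of Figures~\ref{notdecomposable} and~\ref{notdecomposable1}, where either $W$ fails to be such a union or $B(X,Y,Z)$ has arcs outside $[X,Y]\cup[Y,Z]$.
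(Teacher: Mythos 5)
Your proposal is correct and follows essentially the same route as the paper: the same row/column enumerations of $X$, $Y$, $W$ and $Z$, the same map $\phi$ and set $\zeta$, the analogue of Claim~\ref{claim1}, and the same staged level argument for removing the off-diagonal vertices. You merely make explicit what the paper leaves implicit, namely that $W$ is a union of the columns $Y''_j$ ($j\le c_7$) while meeting every row $Y'_i$, which is exactly why the diagonal arcs into the $\tilde z$-block match $[W,Z]$.
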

\begin{proof}
Let $[X,Y]$ and $[Y,Z]$ contain only forward arcs.
It is sufficient to define a mapping $\phi:V(B(X,Y,Z))\rightarrow V(B(X,Y,Z)\overunderslash{g=1}{c_1}X'_g\overunderslash{i=1}{c_3}Y'_i\overunderslash{k=1}{c_5}Z'_k\,\boxbackslash B(X,Y,Z)\overunderslash{h=1}{c_2}X''_h\overunderslash{j=1}{c_4}Y''_j\overunderslash{l=1}{c_6}Z''_l)$ and to prove that $\phi$ is an isomorphism from $B(X,Y,Z)$ to $B(X,Y,Z)\overunderslash{g=1}{c_1}X'_g\overunderslash{i=1}{c_3}Y'_i\overunderslash{k=1}{c_5}Z'_k\boxbackslash B(X,Y,Z) \overunderslash{h=1}{c_2}X''_h$ $\overunderslash{j=1}{c_4}Y''_j\overunderslash{l=1}{c_6}Z''_l$.

Because, $|X|=c_1\cdot c_2$, we define $X=\{u_{1,1},\ldots,u_{1,c_{_2}},\ldots,u_{c_{_1},1},\ldots,u_{c_{_1},c_{_2}}\}$.
Then, we can contract $X$ using the sets $X'_1,\ldots, X'_{c_{_1}}$, $X'_g=\{u_{g,1},\ldots,u_{g,c_{_2}}\}$, $|X'_g|=c_2,g=1,\ldots,c_1$. The vertices in the sets $X'_1,\ldots, X'_{c_{_1}}$ are then replaced by the vertices $\tilde{x}'_1,\ldots, \tilde{x}'_{c_{_1}}$, respectively, (note that there are no arcs that have both their ends in $X'_g$), and we can contract $X$ using the sets $X''_{1},\ldots,X''_{c_{_2}},$ $X''_h=\{u_{1,h},\ldots,u_{c_{_1},h}\}$, $|X''_h|=c_1,h=1,\ldots,c_2$. The vertices in the sets $X''_1,\ldots, X''_{c_{_2}}$ are then replaced by the vertices $\tilde{x}''_1,\ldots, \tilde{x}''_{c_{_2}}$, respectively, (note that there are no arcs that have both their ends in  $X''_h$).

Because, $|Y|=c_3\cdot c_4$, $|W|=c_3\cdot c_7$ and $W\subseteq Y$ we have that $c_7\leq c_4$.
Therefore, we define  $Y=\{v_{1,1},\ldots,v_{1,c_{_4}},\ldots, v_{c_{_3},1},\ldots,$ $v_{c_{_3},c_{_4}}\}$ and $W=\{v_{1,1},\ldots,v_{1,c_{_7}},\ldots, v_{c_{_3},1},\ldots,v_{c_{_3},c_{_7}}\}$, satisfying $W\subseteq Y$.
Then, we can contract $Y$ using the sets $Y'_1,\ldots,Y'_{c_{_3}}$, $Y'_i=\{v_{i,1},\ldots,v_{i,c_{_4}}\}, |Y'_i|=c_4,i=1,\ldots,c_3$. The vertices in the sets $Y'_1,\ldots, Y'_{c_{_3}}$ are then replaced by the vertices $\tilde{y}'_1,\ldots, \tilde{y}'_{c_{_3}}$, respectively, (note that there are no arcs that have both their ends in $Y'_i$), and we can contract $Y$ using the sets  $Y''_1,\ldots,Y''_{c_{_4}}$, $Y''_j=\{v_{1,j},\ldots,v_{c_{_3},j}\},|Y''_j|=c_3,j=1,\ldots,c_4$. The vertices in the sets $Y''_1,\ldots, Y''_{c_{_4}}$ are then replaced by the vertices $\tilde{y}''_1,\ldots, \tilde{y}''_{c_{_4}}$, respectively, (note that there are no arcs that have both their ends in $Y'_h$). 
Using analogue definitions for $W'_i$ and $W''_j$ with respect to the definitions of $Y'_i$ and $Y''_j$, we have that the vertices in the sets $W'_1,\ldots, W'_{c_{_3}}$ are replaced by the vertices $\tilde{y}'_1,\ldots, \tilde{y}'_{c_{_3}}$ and the vertices in the sets $W''_1,\ldots, W''_{c_{_7}}$ are replaced by the vertices $\tilde{y}''_1,\ldots, \tilde{y}''_{c_{_7}}$.

Because, $|Z|=c_5\cdot c_6$,  we define $Z=\{w_{1,1},\ldots,w_{1,c_{_6}},\ldots,w_{c_{_5},1},\ldots,w_{c_{_5},c_{_6}}\}$.
Then, we can contract $Z$ using the sets $Z'_1,\ldots, Z'_{c_{_5}}$, $Z'_k=\{w_{k,1},\ldots,w_{k,c_{_6}}\}$, $|Z'_k|=c_6,k=1,\ldots,c_5$. 
The vertices in the sets $Z'_1,\ldots, Z'_{c_{_5}}$ are then replaced by the vertices $\tilde{z}'_1,\ldots, \tilde{z}'_{c_{_5}}$, respectively, (note that there are no arcs that have both their ends in $Z'_k$), and we can contract $Z$ using the sets $Z''_{1},\ldots,Z''_{c_{_6}},$ $Z''_l=\{w_{1,l},\ldots,w_{c_{_5},l}\}$, $|Z'_l|=c_5,l=1,\ldots,c_6$. The vertices in the sets $Z''_1,\ldots, Z''_{c_{_6}}$ are then replaced by the vertices $\tilde{z}''_1,\ldots, \tilde{z}''_{c_{_6}}$, respectively, (note that there are no arcs that have both their ends in  $Z''_l$). 
 
Consider the mapping $\phi:V(B(X,Y,Z))\rightarrow V(B(X,Y$ $,Z)\overunderslash{g=1}{c_1}X'_g\overunderslash{i=1}{c_3}Y'_i\overunderslash{k=1}{c_5}Z'_k\boxbackslash B(X,Y,Z)$ $\overunderslash{h=1}{c_2}X''_h\overunderslash{j=1}{c_4}Y''_j\overunderslash{l=1}{c_6}Z''_l)$ defined by $\phi(u_{g,h})=(\tilde{x}'_g,\tilde{x}''_h),\phi(v_{i,j})=(\tilde{y}'_i,\tilde{y}''_j),\phi(w_{k,l})=(\tilde{z}'_k,\tilde{z}''_l)$.
Then $\phi$ is obviously a bijective map if $V(B(X,Y,Z)\overunderslash{g=1}{c_1}X'_g\overunderslash{i=1}{c_3}Y'_i\overunderslash{k=1}{c_5}Z'_k\boxbackslash B(X,Y,Z)\overunderslash{h=1}{c_2}X''_h\overunderslash{j=1}{c_4}Y''_j\overunderslash{l=1}{c_6}Z''_l)=\zeta$, where $\zeta$ is defined as $\zeta=\{(\tilde{x}'_g,\tilde{x}''_h)\mid u_{g,h}\in X,\phi(u_{g,h})=(\tilde{x}'_g,\tilde{x}''_h)\}\cup \{(\tilde{y}'_i,\tilde{y}''_j)\mid v_{i,j}\in Y,\phi(v_{i,j})=(\tilde{y}'_i,\tilde{y}''_j)\}\cup \{(\tilde{z}'_k,\tilde{z}''_l)\mid w_{k,l}\in Z,\phi(w_{k,l})=(\tilde{z}'_k,\tilde{z}''_l)\}$.

We are going to show this later by arguing all the other vertices (and their labelled arcs) $(\tilde{x}'_g,\tilde{y}''_j), (\tilde{x}'_g,\tilde{z}''_l),(\tilde{y}'_i,\tilde{x}''_h),(\tilde{y}'_i,\tilde{z}''_l),(\tilde{z}'_k,\tilde{x}''_h)$ and $(\tilde{z}'_k,\tilde{y}''_j)$ of $B(X,Y,Z)\overunderslash{g=1}{c_1}X'_g\overunderslash{i=1}{c_3}Y'_i\overunderslash{k=1}{c_5}Z'_k\boxtimes B(X,Y,$ $Z)\overunderslash{h=1}{c_2}X''_h\overunderslash{j=1}{c_4}Y''_j\overunderslash{l=1}{c_6}Z''_l$ will disappear from $B(X,Y,Z)\overunderslash{g=1}{c_1}X'_g\overunderslash{i=1}{c_3}Y'_i\overunderslash{k=1}{c_5}Z'_k\boxtimes B(X,Y,Z)\overunderslash{h=1}{c_2}X''_h\overunderslash{j=1}{c_4}Y''_j$ $\overunderslash{l=1}{c_6}Z''_l$. 	
But first we are going to prove the following claim. 
\begin{claim}\label{claim3}
The subgraph of $B(X,Y,Z)\overunderslash{g=1}{c_1}X'_g\overunderslash{i=1}{c_3}Y'_i\overunderslash{k=1}{c_5}Z'_k\boxtimes B(X,Y,Z)\overunderslash{h=1}{c_2}X''_h\overunderslash{j=1}{c_4}Y''_j\overunderslash{l=1}{c_6}Z''_l$ induced by $\zeta$ is isomorphic to $B(X,Y,Z)$.
\end{claim}
\begin{proof}
$\phi$ is a bijection from $V(B(X,Y,Z))$ to $\zeta$.
It remains to show that this bijection preserves the arcs and their label pairs.
Due to Lemma~\ref{lemma1} and because the subgraph $B(X,Y)$ is a complete bipartite subgraph, we have that an arc $u_{g,h}v_{i,j}$ in $[X,Y]$ is represented by the arc $(\tilde{x}'_g,\tilde{x}''_h)(\tilde{y}'_i,\tilde{y}''_j)$ in $A(B(X,Y,Z)\overunderslash{g=1}{c_1}X'_g\overunderslash{i=1}{c_3}Y'_i\overunderslash{k=1}{c_5}Z'_k\boxtimes B(X,Y,Z)\overunderslash{h=1}{c_2}X''_h\overunderslash{j=1}{c_4}Y''_j\overunderslash{l=1}{c_6}Z''_l)$ and, due to Lemma~\ref{lemma1} and because the subgraph $B(W,Z)$ is a complete bipartite subgraph, an arc $v_{i,j}w_{k,l}$ in $[W,Z]$ is represented by the arc $(\tilde{y}'_i,\tilde{y}''_j)(\tilde{z}'_k,\tilde{z}''_l)$ in $A(B(X,Y,Z)\overunderslash{g=1}{c_1}X'_g\overunderslash{i=1}{c_3}Y'_i\overunderslash{k=1}{c_5}Z'_k\boxtimes B(X,Y,Z)$ $\overunderslash{h=1}{c_2}X''_h\overunderslash{j=1}{c_4}Y''_j\overunderslash{l=1}{c_6}Z''_l)$.
Together with $[Y\backslash W,Z]=\emptyset$, we have that the subgraph of $B(X,Y,Z)\overunderslash{g=1}{c_1}X'_g$ $\overunderslash{i=1}{c_3}Y'_i\overunderslash{k=1}{c_5}Z'_k\boxtimes B(X,Y,Z)\overunderslash{h=1}{c_2}X''_h\overunderslash{j=1}{c_4}Y''_j\overunderslash{l=1}{c_6}Z''_l$ induced by $\zeta$ is isomorphic to $B(X,Y,Z)$.
\end{proof}
We continue with the proof of Lemma~\ref{lemma2}. 
It remains to show that all other vertices of $B(X,Y,Z)\overunderslash{g=1}{c_1}X'_g$ $\overunderslash{i=1}{c_3}Y'_i\overunderslash{k=1}{c_5}Z'_k\boxtimes B(X,Y,Z)\overunderslash{h=1}{c_2}X''_h\overunderslash{j=1}{c_4}Y''_j\overunderslash{l=1}{c_6}Z''_l$, except for the vertices of $\zeta$, disappear from $B(X,Y,Z)\overunderslash{g=1}{c_1}X'_g$ $\overunderslash{i=1}{c_3}Y'_i\overunderslash{k=1}{c_5}Z'_k\boxtimes B(X,Y,Z)\overunderslash{h=1}{c_2}X''_h\overunderslash{j=1}{c_4}Y''_j\overunderslash{l=1}{c_6}Z''_l$.
Due to Lemma~\ref{lemma1}, this is clear for the vertices $(\tilde{x}'_g,\tilde{y}''_j)$ and $(\tilde{y}'_i,\tilde{x}''_h)$.
Likewise, due to Lemma~\ref{lemma1} and the removal of all $(\tilde{x}'_g,\tilde{y}''_j)$  and $(\tilde{y}'_i,\tilde{x}''_h)$) , this is also clear for the vertices $(\tilde{y}'_i,\tilde{z}''_l)$ and $(\tilde{z}'_k,\tilde{y}''_j)$.
Remains to show that the vertices $(\tilde{x}'_g,\tilde{z}''_l)$ and $(\tilde{z}'_k,\tilde{x}''_h)$ are also removed from $B(X,Y,Z)\overunderslash{g=1}{c_1}X'_g$ $\overunderslash{i=1}{c_3}Y'_i\overunderslash{k=1}{c_5}Z'_k\boxtimes B(X,Y,Z)\overunderslash{h=1}{c_2}X''_h\overunderslash{j=1}{c_4}Y''_j\overunderslash{l=1}{c_6}Z''_l$.
Because there are no arcs $a$ with $ head(a)=\tilde{x}'_g$ in $B(X,Y,Z)\overunderslash{g=1}{c_1}X'_g$ $\overunderslash{i=1}{c_3}Y'_i\overunderslash{k=1}{c_5}Z'_k$ there are no arcs $b$ with $head(b)=(\tilde{x}'_g,\tilde{z}''_l)$  in $B(X,Y,Z)\overunderslash{g=1}{c_1}X'_g$ $\overunderslash{i=1}{c_3}Y'_i\overunderslash{k=1}{c_5}Z'_k\boxtimes B(X,Y,$ $Z)\overunderslash{h=1}{c_2}X''_h\overunderslash{j=1}{c_4}Y''_j\overunderslash{l=1}{c_6}Z''_l$, and, because there are no arcs $a$ with $head(a)= \tilde{x}''_h$ in $B(X,Y,Z)\overunderslash{h=1}{c_2}X''_h\overunderslash{j=1}{c_4}$ $Y''_j\overunderslash{l=1}{c_6}Z''_l\,$ there are no arcs $b$ with $head(b)=(\tilde{z}'_k,\tilde{x}''_h)$ in $B(X,Y,Z)\overunderslash{g=1}{c_1}X'_g\overunderslash{i=1}{c_3}Y'_i\overunderslash{k=1}{c_5}Z'_k\boxtimes B(X,Y,Z)$ $\overunderslash{h=1}{c_2}X''_h$ $\overunderslash{j=1}{c_4}Y''_j\overunderslash{l=1}{c_6}Z''_l$.
Hence, the $(x'_g,z''_l)$ and $(\tilde{z}'_k,\tilde{x}''_h)$ must have level~0 in $B(X,Y,Z)\overunderslash{g=1}{c_1}X'_g\overunderslash{i=1}{c_3}Y'_i$ $\overunderslash{k=1}{c_5}Z'_k\boxtimes B(X,Y,Z)\overunderslash{h=1}{c_2}X''_h\overunderslash{j=1}{c_4}Y''_j\overunderslash{l=1}{c_6}Z''_l$.
But, the level of $(\tilde{x}'_g,\tilde{z}''_l)$ and $(\tilde{z}'_k,\tilde{x}''_h)$ is greater than zero in $B(X,Y,$ $Z)\overunderslash{g=1}{c_1}X'_g$ $\overunderslash{i=1}{c_3}Y'_i\overunderslash{k=1}{c_5}Z'_k\Box$ $B(X,Y,Z)\overunderslash{h=1}{c_2}X''_h\overunderslash{j=1}{c_4}Y''_j\overunderslash{l=1}{c_6}Z''_l$ and the level of $(\tilde{x}'_g,\tilde{z}''_l)$ and $(\tilde{z}'_k,\tilde{x}''_h)$ is zero in $B(X,Y,Z)\overunderslash{g=1}{c_1}X'_g\overunderslash{i=1}{c_3}Y'_i$ $\overunderslash{k=1}{c_5}Z'_k\boxtimes B(X,Y,Z)\overunderslash{h=1}{c_2}X''_h\overunderslash{j=1}{c_4}Y''_j$ $\overunderslash{l=1}{c_6}Z''_l$.
Therefore, $(\tilde{x}'_g,\tilde{z}''_l)$ and $(\tilde{z}'_k,\tilde{x}''_h)$ are removed from $B(X,Y,Z)\overunderslash{g=1}{c_1}X'_g$ $\overunderslash{i=1}{c_3}Y'_i\overunderslash{k=1}{c_5}Z'_k\boxtimes B(X,Y,Z)\overunderslash{h=1}{c_2}X''_h\overunderslash{j=1}{c_4}Y''_j$ $\overunderslash{l=1}{c_6}Z''_l$.
Because there are no other vertices in $B(X,Y,Z)\overunderslash{g=1}{c_1}X'_g$ $\overunderslash{i=1}{c_3}Y'_i\overunderslash{k=1}{c_5}Z'_k$ $\boxtimes B(X,Y,Z)\overunderslash{h=1}{c_2}X''_h$ $\overunderslash{j=1}{c_4}Y''_j\overunderslash{l=1}{c_6}Z''_l$, we have that $B(X,$ $Y,Z)\cong B(X,Y,Z)\overunderslash{g=1}{c_1}X'_g\overunderslash{i=1}{c_3}Y'_i$ $\overunderslash{k=1}{c_5}Z'_k \boxbackslash B(X,Y,Z)\overunderslash{h=1}{c_2}X''_h\overunderslash{j=1}{c_4}Y''_j$ $\overunderslash{l=1}{c_6}Z''_l$.
The proof for $[X,Y]$ and $[Y,Z]$ containing only backward arcs is similar.
This completes the proof of Lemma~\ref{lemma2}.
\end{proof}
Note, if $c_1=c_3=c_5=1$ then we have for $B(X,Y,Z)$ that each vertex $u_{g,h}$ of $X$ corresponds to a vertex $\tilde{x}''_{g,h}$ of $B(X,Y,Z)\overunderslash{h=1}{c_2}X''_h\overunderslash{j=1}{c_4}Y''_j\overunderslash{l=1}{c_6}Z''_l$, the vertex $v_{i,j}$ of $Y$ corresponds to the vertex $\tilde{y}''_{i,j}$ of $B(X,Y,Z)\overunderslash{h=1}{c_2}X''_h\overunderslash{j=1}{c_4}Y''_j\overunderslash{l=1}{c_6}Z''_l$, the vertex $w_{k,l}$ of $Z$ corresponds to the vertex $\tilde{z}''_{k,l}$ of $B(X,Y,Z)\overunderslash{h=1}{c_2}X''_h\overunderslash{j=1}{c_4}Y''_j\overunderslash{l=1}{c_6}Z''_l$, each arc $u_{g,h}v_{i,j}$ corresponds to an arc $\tilde{x}''_{g,h}\tilde{y}''_{i,j}$ of $B(X,Y,Z)\overunderslash{h=1}{c_2}X''_h$ $\overunderslash{j=1}{c_4}Y''_j\overunderslash{l=1}{c_6}Z''_l$ and each arc $v_{i,j}w_{k,l}$ corresponds to an arc $\tilde{y}''_{i,j}\tilde{z}''_{k,l}$ of $B(X,Y,Z)\overunderslash{h=1}{c_2}X''_h\overunderslash{j=1}{c_4}Y''_j\overunderslash{l=1}{c_6}Z''_l$.
This gives $B(X,Y,Z)\cong B(X,Y,Z)\overunderslash{h=1}{c_2}X''_h\overunderslash{j=1}{c_4}Y''_j\overunderslash{l=1}{c_6}Z''_l$.
Furthermore, we have for $B(X,Y,Z)$ that the vertices of $X$ correspond to the vertex $\tilde{x}'$ of $B(X,Y,Z)\overunderslash{i=1}{c_1}X'_i$ $\overunderslash{i=1}{c_3}Y'_i\overunderslash{i=1}{c_5}Z'_i$, the vertices of $Y$ of $B(X,Y,Z)\overunderslash{i=1}{c_1}X'_i$ $\overunderslash{i=1}{c_3}Y'_i\overunderslash{i=1}{c_5}Z'_i$ correspond to the vertex $\tilde{y}'$, the vertices of $Z$ correspond to the vertex $\tilde{z}'$ of $B(X,Y,Z)\overunderslash{i=1}{c_1}X'_i$ $\overunderslash{i=1}{c_3}Y'_i\overunderslash{i=1}{c_5}Z'_i$, each arc $u_{g,h}v_{i,j}$ corresponds to the arc $\tilde{x}'\tilde{y}'$ of $B(X,Y,Z)\overunderslash{h=1}{c_1}X'_h$ $\overunderslash{j=1}{c_3}Y'_j\overunderslash{l=1}{c_5}Z'_l$ and each arc $v_{i,j}w_{k,l}$ corresponds to the arc $\tilde{y}'\tilde{z}'$ of $B(X,Y,Z)$ $\overunderslash{h=1}{c_1}X'_h\overunderslash{j=1}{c_3}Y'_j\overunderslash{l=1}{c_5}Z'_l$.
Then, $B(X,Y,Z)\overunderslash{i=1}{c_1}X'_i\overunderslash{i=1}{c_3}Y'_i\overunderslash{i=1}{c_5}Z'_i$ is a path from $\tilde{x}'$ to $\tilde{z}'$ and we have that  $B(X,Y,Z)\cong B(X,Y,Z)\overunderslash{i=1}{c_1}X'_i\overunderslash{i=1}{c_3}Y'_i\overunderslash{i=1}{c_5}Z'_i\boxbackslash B(X,Y,Z)\overunderslash{i=1}{c_2}X''_i\overunderslash{i=1}{c_4}Y''_i\overunderslash{i=1}{c_6}Z''_i$.
Because there is no reduction of the number of vertices (and arcs) in $B(X,Y,Z)\overunderslash{h=1}{c_2}X''_h\overunderslash{j=1}{c_4}Y''_j\overunderslash{l=1}{c_6}Z''_l$ with respect to $B(X,Y,Z)$, this is a useless decomposition.
Likewise, for $c_2=c_4=c_6=1$, we have as well such a useless decomposition.
Therefore, at least one of the values of $GCD(c_1,c_2)$, $GCD(c_3,c_4)$ or $GCD(c_5,c_6)$ has to be greater than one, or, in case $|X|,|Y|$ and $|Z|$ are prime numbers, at least one but not all of the $c_1,c_3,c_5$ have to be greater than one (and, therefore, at least one but not all of the $c_2,c_4,c_6$ is greater than one).

We continue with the decomposition of an $n$-partite graph $B(X_1,\ldots,X_n)$ where all arcs have the same label, the arcs in $[X_1,X_2],\ldots,[X_{n-1},X_{n}]$ are the only arcs of $B(X_1,\ldots,X_n)$, the subgraph $B(X_1,X_2)$ of $B(X_1,\ldots,X_n)$ induced by $X_1\cup X_2$ is a complete bipartite graph, each subgraph $B(X_i,\chi_{i+1}), i=2,\ldots,n-1,$ of $B(X_1,\ldots,X_n)$ arc induced by the arcs of $[X_i,X_{i+1}]$ is a complete bipartite graph with $\chi_{i+1}\subseteq X_{i+1}$ (note that $\chi_{n}=X_{n}$). 
The partition of $X_1$ in subsets of $X_1$ is similar to the partition of $X$ in subsets of $X$ in Lemma~\ref{lemma2}, the partition of each of the $X_{2},\ldots,X_{n-1}$ in subsets of $X_{2},\ldots,X_{n-1}$ is similar to the partition of $Y$ in subsets of $Y$ in Lemma~\ref{lemma2},  the partition of each of the $\chi_{2},\ldots,\chi_{n-1}$ in subsets of $\chi_{2},\ldots,\chi_{n-1}$ is similar to the partition of $W$ in subsets of $W$ in Lemma~\ref{lemma2} and the partition of $X_n$ in subsets of $X_n$ is similar to the partition of $Z$ in subsets of $Z$ in Lemma~\ref{lemma2}.
Following these requirements, we state and prove the following decomposition theorem.

\begin{theorem}\label{theorem}
Let $B(X_1,\ldots, X_n)$ be a weakly connected $n$-partite graph where the labels of all arcs are the same.
Let $[X_1,X_{2}],\ldots,[X_{n-1},X_n]$ contain the only arcs of $B(X_1,\ldots, X_n)$.
Let $[X_i,X_{i+1}]$ contain only forward arcs for all $i\in\{1,\ldots,n-1\}$ or let $[X_i,X_{i+1}]$ contain only backward arcs for all $i\in\{1,\ldots,n-1\}$.
Let $c_{1,1},c_{1,2},c_{2,3},\ldots,c_{n-1,3},c_{2,4},\ldots,c_{n-1,4},c_{n,5},c_{n,6},c_{2,7},\ldots,$ $c_{n-1,7}\in~\hspace{-5pt}\mathbb{N}$.
Let $|X_1|=c_{1,1}\cdot c_{1,2}$.
Let $B(X_1,X_2)$ be the complete bipartite subgraph of $B(X_1,\ldots,$ $X_n)$ induced by $X_1\cup X_2$.
Let $B(\chi_{i},X_{i+1}), \chi_{i}\subseteq X_{i}, i\in\{2,\ldots,n-1\} $ be the complete bipartite subgraph of $B(X_1,\ldots,X_n)$ arc-induced by the arcs in $[X_{i},X_{i+1}]$.
Let $GCD(|X_{i}|,|\chi_{i}|)=c_{i,3}$, $|X_{i}|=c_{i,3}\cdot c_{i,4}, |\chi_{i}|=c_{i,3}\cdot c_{i,7}$, $i\in\{2,\ldots,n-1\}$.
Let $|X_{n}|=c_{n,5}\cdot c_{n,6}$.
Then there exist $X'_{1,g}, X'_{m,i},X'_{n,k},X''_{1,h},$ $X''_{m,j},X''_{n,l}$ such that $B(X_1,\ldots,X_n)\cong B(X_1,\ldots,X_n)
\overunderslash{g=1}{c_{1,1}}X'_{1,g}
\overunderslash{m=2}{n-1}\,
\overunderslash{i=1}{c_{i,3}}X'_{m,i}$ $
\overunderslash{k=1}{c_{n,5}}X'_{n,k}\boxbackslash B(X_1,\ldots,$ $X_n)
\overunderslash{h=1}{c_{1,2}}X''_{1,h}
\overunderslash{m=2}{n-1}\,
\overunderslash{j=1}{c_{j,4}}X''_{m,j}
\overunderslash{l=1}{c_{n,6}}X''_{n,l}$.
\end{theorem}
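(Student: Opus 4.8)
The plan is to follow the same template as the proofs of Lemma~\ref{lemma1} and Lemma~\ref{lemma2}. Write $G_1$ and $G_2$ for the two contracted graphs appearing on the right-hand side of the claimed isomorphism, exhibit an explicit bijection $\phi\colon V(B(X_1,\ldots,X_n))\to V(G_1\boxbackslash G_2)$, and prove that $\phi$ is an isomorphism. First I would fix, for each partite set, a grid labelling compatible with both families of contractions: $X_1=\{u_{g,h}\}$ with $g\le c_{1,1}$, $h\le c_{1,2}$; for $2\le m\le n-1$, $X_m=\{v^{(m)}_{i,j}\}$ with $i\le c_{m,3}$, $j\le c_{m,4}$, chosen so that $\chi_m$ is the sub-grid on the first $c_{m,7}\le c_{m,4}$ columns (possible since $\chi_m\subseteq X_m$ and $\mathrm{GCD}(|X_m|,|\chi_m|)=c_{m,3}$, exactly as $W$ is treated in Lemma~\ref{lemma2}); and $X_n=\{w_{k,l}\}$ with $k\le c_{n,5}$, $l\le c_{n,6}$. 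Contracting along ``rows'' produces $G_1$, so that each $X'_{m,i}$ collapses to a vertex $\tilde x'_{m,i}$, and contracting along ``columns'' produces $G_2$, so that each $X''_{m,j}$ collapses to a vertex $\tilde x''_{m,j}$; as before, no contracted set contains both ends of an arc. Define $\phi$ by sending each vertex of $X_m$ to the pair formed by its row-vertex in $G_1$ and its column-vertex in $G_2$, and let $\zeta\subseteq V(G_1\boxtimes G_2)$ be the image of $\phi$, which is exactly the union of the ``diagonal blocks'' $\tilde X'_m\times\tilde X''_m$.

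The next step is the analogue of Claim~\ref{claim3}: the subgraph of the intermediate product $G_1\boxtimes G_2$ induced by $\zeta$ is isomorphic to $B(X_1,\ldots,X_n)$. Since $[X_1,X_2],\ldots,[X_{n-1},X_n]$ are the only arcs of $B(X_1,\ldots,X_n)$, it is enough to treat each consecutive pair: the arcs of $G_1\boxtimes G_2$ running from the diagonal block $\tilde X'_m\times\tilde X''_m$ to the diagonal block $\tilde X'_{m+1}\times\tilde X''_{m+1}$ come exactly from the arcs of $[X_m,X_{m+1}]$, which for $m=1$ form the complete bipartite graph $B(X_1,X_2)$ and for $m\ge 2$ form the complete arc-induced bipartite graph $B(\chi_m,X_{m+1})$ with $[X_m\setminus\chi_m,X_{m+1}]=\emptyset$. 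Applying the argument of Claim~\ref{claim1} (as reused inside Claim~\ref{claim3}) to each such complete bipartite piece shows that these are precisely the diagonal arcs obtained from pairs of synchronising arcs, and that $\phi$ puts them in label-preserving bijection with $[X_m,X_{m+1}]$; assembling the pieces gives that $\phi$ restricts to an isomorphism of $B(X_1,\ldots,X_n)$ onto the $\zeta$-induced subgraph.

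It then remains to show that every vertex of $V(G_1\boxtimes G_2)\setminus\zeta$, together with the arcs it is a tail of, is deleted by the VRSP. The key observation is the layered structure: in $G_1$ the vertices of $\tilde X'_a$ form ``layer $a$'', every arc of $G_1$ runs from layer $a$ to layer $a+1$, a vertex of $G_1$ has an incoming arc iff it is not in layer~$1$ (here completeness of $B(X_1,X_2)$ and of the $B(\chi_{m-1},X_m)$ is used), and the same holds in $G_2$; since all labels are equal, every arc of the intermediate product is a diagonal arc, hence runs from a block $\tilde X'_a\times\tilde X''_b$ to the block $\tilde X'_{a+1}\times\tilde X''_{b+1}$. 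Thus an off-diagonal block $(a,b)$, $a\neq b$, receives arcs in $G_1\boxtimes G_2$ only from the block $(a-1,b-1)$ (and none at all when $\min(a,b)=1$), while in the Cartesian product it always has positive level because of an incoming horizontal or vertical arc from $(a-1,b)$ or $(a,b-1)$. I would therefore induct on $t=\min(a,b)$: the off-diagonal blocks with $t=1$ have level~$0$ in the intermediate product but positive level in the Cartesian product, so they are removed; and once every off-diagonal block with $\min<t$ has been removed, each off-diagonal block with $\min=t$ loses all of its incoming arcs, which came from $(a-1,b-1)$, an off-diagonal block with minimum $t-1$, so it has level~$0$ in what remains of the intermediate product while still having positive level in the Cartesian product, and is removed as well. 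The diagonal blocks are never affected: block $(m,m)$ with $m\ge 2$ always retains its incoming diagonal arc from the diagonal, hence surviving, block $(m-1,m-1)$, while block $(1,1)$ has level~$0$ already in the Cartesian product so the removal rule never applies to it. After all off-diagonal blocks are gone, what is left of $G_1\boxtimes G_2$ is exactly the $\zeta$-induced subgraph, which by the claim is isomorphic to $B(X_1,\ldots,X_n)$; the case in which all $[X_i,X_{i+1}]$ contain only backward arcs is symmetric.

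I expect the main obstacle to be making this cascade of vertex removals fully rigorous for general $n$: one must verify that at each stage $t$ the blocks scheduled for deletion really have lost all their incoming arcs in what is left of the intermediate product, so that the ``level~$0$'' hypothesis of the VRSP genuinely holds, and that neither the diagonal blocks nor the vertices of $X_m\setminus\chi_m$ sitting inside them are ever swept away by the iteration. This is precisely where the hypotheses that $[X_1,X_2],\ldots,[X_{n-1},X_n]$ are the only arcs, and that $B(X_1,X_2)$ and the $B(\chi_m,X_{m+1})$ are complete, are needed, as the failures illustrated in Figures~\ref{notdecomposable} and~\ref{notdecomposable1} for the $3$-partite case already show.
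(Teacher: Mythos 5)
Your proposal is correct, and it uses the same underlying machinery as the paper (grid labellings of each partite set, row/column contractions, the map $\phi$, the diagonal set $\zeta$, and the level-$0$-versus-level-$>0$ criterion for VRSP removal), but it organizes the general case differently. The paper proves Theorem~\ref{theorem} by induction on $n$: Lemma~\ref{lemma1} and Lemma~\ref{lemma2} serve as base cases, and the inductive step attaches the layer $X_n$ to an assumed decomposition of $B(X_1,\ldots,X_{n-1})$ ``with similar arguments as for $Y$ and $Z$ of Lemma~\ref{lemma2}''. You instead give a single direct argument for all $n$: you build $\phi$ and $\zeta$ globally, prove the analogue of Claim~\ref{claim3} layer by layer using completeness of $B(X_1,X_2)$ and of the arc-induced $B(\chi_m,X_{m+1})$ together with $[X_m\setminus\chi_m,X_{m+1}]=\emptyset$, and then handle the deletions by a secondary induction on $t=\min(a,b)$ over the off-diagonal blocks, using the fact that every surviving arc of the intermediate product is a diagonal arc from block $(a,b)$ to block $(a+1,b+1)$. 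What your route buys is precisely the detail the paper leaves implicit: an explicit justification that the cascade of removals is well-founded (each off-diagonal block loses all incoming arcs only after the block $(a-1,b-1)$ feeding it has been removed) and that the diagonal blocks are never swept away because block $(m,m)$ always retains an incoming arc from the surviving block $(m-1,m-1)$ while block $(1,1)$ has level~$0$ in the Cartesian product. What the paper's induction on $n$ buys is brevity and direct reuse of Lemma~\ref{lemma2}, at the cost of leaving the reader to check that the $(n-1)$-partite decomposition composes correctly with the new layer. The only points worth making explicit in your write-up are that each $\chi_m$ is nonempty (which follows from weak connectedness, and is needed so that every row of $X_m$ meets $\chi_m$ and every contracted row/column vertex in layer $m\ge 2$ has an incoming arc) and that, as in the paper, multiple parallel arcs are treated as a single contracted arc; neither affects the validity of the argument.
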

\begin{proof}
Proof by induction. For $n=2$, we apply Lemma~\ref{lemma1}. For $n=3$, we apply Lemma~\ref{lemma2}.
Let $B(X_1,\ldots,X_{n-1})$ be decomposed into two $(n-1)$-partite graphs $B(X_1,\ldots,$ $X_{n-1})
\overunderslash{g=1}{c_{1,1}}X'_{1,g}
\overunderslash{m=2}{n-2}\,
\overunderslash{i=1}{c_{i,3}}$ $X'_{m,i}
\overunderslash{k=1}{c_{n-1,5}}X'_{n-1,k}$ and $B(X_1,\ldots,X_{n-1})
\overunderslash{h=1}{c_{1,2}}X''_{1,h}\,
\overunderslash{m=2}{n-2}\,
\overunderslash{j=1}{c_{j,4}}X''_{m,j}$ $
\overunderslash{l=1}{c_6}X''_{n-1,l}$,
such that $B(X_1,\ldots,X_{n-1})$ $\cong B(X_1,X_2,\ldots,$ $X_{n-1})
\overunderslash{g=1}{c_{1,1}}X'_{1,g}\,
\overunderslash{m=2}{n-2}\,
\overunderslash{i=1}{c_{i,3}}X'_{m,i}
\overunderslash{k=1}{c_{n-1,5}}X'_{n-1,k}\,\boxbackslash\, B(X_1,\ldots,X_{n-1})
\overunderslash{h=1}{c_{1,2}}X''_{1,h}\,
\overunderslash{m=2}{n-2}\,
\overunderslash{j=1}{c_{i,4}}X''_{m,j}$ $
\overunderslash{l=1}{c_{n-1,6}}X''_{n-1,l}$. 
Then, for $B(X_1,\ldots,X_n)$, we have the partition of $X_{n-1}$ in the sets $X'_{n-1,i},i=1,\ldots c_{n-1,3}$ and the partition of $X_{n-1}$ in the sets $X''_{n-1,j},j=1,\ldots c_{n-1,4}$, the partition of $\chi_{n-1}\subseteq X_{n-1}$ in the sets $\chi'_{n-1,i},i=1,\ldots c_{n-1,3}$ and the partition of $\chi_{n-1}$ in the sets $\chi''_{n-1,j},j=1,\ldots c_{n-1,7}$, and the partition of $X_n$ in the sets $X'_{n,k},k=1,\ldots c_{n,5}$ and the partition of $X_n$ in the sets $X''_{n,l},l=1,\ldots c_{n,6}$.
Now, with similar arguments as for $Y$ and $Z$ of Lemma~\ref{lemma2}, we have that $B(X_1,\ldots,X_n)$ $\cong B(X_1,\ldots,X_n)$ $
\overunderslash{g=1}{c_{1,1}}X'_{1,g}
\overunderslash{m=2}{n-1}\,
\overunderslash{i=1}{c_{i,3}}$ $X'_{m,i}
\overunderslash{k=1}{c_{n,6}}X'_{n,k}$ $\boxbackslash $ $B(X,\ldots,X_n)
\overunderslash{h=1}{c_{1,2}}X''_{1,h}
\overunderslash{m=2}{n-1}\,
{\overunderslash{j=1}{c_{j,4}}X''_{m,j}
\overunderslash{l=1}{c_{n,7}}X''_{n,l}}$.
This completes the proof of Theorem~\ref{theorem}.
\end{proof}




\begin{thebibliography}{10}

\bibitem{GraphTheory}
J.~A. Bondy and U.~S.~R. Murty.
\newblock {\em Graph Theory}.
\newblock Springer, Berlin (2008).

\bibitem{ejgta2}
A.~H. {Boode}.
\newblock
A Cartesian graph-decomposition theorem based on a vertex-removing synchronised graph product.
{\em Electr.\ J.\ of Graph Theory and Appl.}  {submitted} 
(2022) 

\bibitem{mod-arxiv}
 A.~H. {Boode}.
\newblock A modification of two graph-decomposition theorems based on a vertex-removing synchronised graph product. arXiv: 2103.10808 (2022) 


\bibitem{ejgta1}
A.~H. {Boode}.
\newblock
Note on decompositions based on the vertex-removing synchronised graph product.
{\em Electr.\ J.\ of Graph Theory and Appl.}  {\bf 10} (1)
(2022) 151--157.


\bibitem{boodethesis}
 A.~H. {Boode}.
\newblock On the Automation of Periodic Hard Real-Time Processes: A Graph-Theoretical Approach. PhD thesis, University of Twente (2018).\\ DOI: 10.3990/1.9789036545518

 
\bibitem{boode2014cpa}
A.~H. {Boode} and J.~F. {Broenink}.
\newblock Performance of periodic real-time processes: a vertex-removing
  synchronised graph product.
\newblock In {\em Communicating Process Architectures 2014, Oxford, UK}, 36th
  WoTUG conference on concurrent and parallel programming, pages 119--138,
  Bicester, August 2014. Open Channel Publishing Ltd.

\bibitem{dam}
A.~H. {Boode}, and H.~J. {Broersma}.
\newblock
Decompositions of graphs based on a new graph product.
{\em Discrete Applied Mathematics.} {\bf 259} (2019) 31--40.

\bibitem{boode2013cpa}
A.~H. {Boode}, H.~J. {Broersma}, and J.~F. {Broenink}.
\newblock Improving the performance of periodic real-time processes: a graph
  theoretical approach.
\newblock In {\em Communicating Process Architectures 2013, Edinburgh, UK},
  35th WoTUG conference on concurrent and parallel programming, pages 57--79,
  Bicester, August 2013. Open Channel Publishing Ltd.

\bibitem{ejgta}
A.~H. {Boode}, H.~J. {Broersma}, and J.~F. {Broenink}.
\newblock
On a directed tree problem motivated by a newly introduced graph product.
{\em Electr.\ J.\ of Graph Theory and Appl.} {\bf 3} (2015) 162--181.



  

  
\end{thebibliography}
\end{document}